\theoremstyle{plain}
\newtheorem{thm}{Theorem}[section]
\newtheorem*{thm*}{Theorem}
\newtheorem*{cor*}{Corollary}
\newtheorem*{prop*}{Proposition}
\newtheorem{prop}[thm]{Proposition}
\newtheorem{lemma}[thm]{Lemma}
\newtheorem{cor}[thm]{Corollary}
\newtheorem*{claim*}{Claim}
\theoremstyle{definition}
\newtheorem{definition}[thm]{Definition}
\newtheorem{remark}[thm]{Remark}
\newtheorem{ex}[thm]{Example}
\newtheorem*{conj*}{Conjecture}
\newtheorem*{ac}{{\sc Acknowledgments}}
\newtheorem*{mpf}{Proof of Theorem \ref{thm:mutation of Brauer tree}}
\newtheorem{method}[thm]{Method}
\newtheorem*{ques*}{Question}
\numberwithin{equation}{thm}
\begin{document}

\setlength{\baselineskip}{15pt}


\newcommand{\modu}[1]{{\rm{mod}}\text{-}#1} 
\newcommand{\proj}[1]{{\rm{proj}}\text{-}#1} 
\newcommand{\inj}[1]{{\rm{inj}}\text{-}#1} 
\newcommand{\add}[1]{\mathsf{add}#1} 

\newcommand{\sta}[1]{{\underline{{\rm{mod}}}}\text{-}#1} 
\newcommand{\ista}[1]{{\overline{{\rm{mod}}}}\text{-}#1} 

\newcommand{\Modu}[1]{{\rm{Mod}}\text{-}#1} 
\newcommand{\Proj}[1]{{\rm{Proj}}\text{-}#1} 
\newcommand{\Inj}[1]{{\rm{Inj}}\text{-}#1} 

\newcommand{\homo}[3]{{\rm{Hom}}_{#1}(#2, #3)} 
\newcommand{\ext}[4]{{\rm{Ext}}_{#1}^{#2}(#3, #4)} 
\newcommand{\tor}[4]{{\rm{Tor}}^{#1}_{#2}(#3, #4)} 
\newcommand{\ehomo}[2]{{\rm{End}}_{#1}(#2)} 
\newcommand{\shomo}[3]{\underline{{\rm{Hom}}}_{#1}(#2, #3)} 
\newcommand{\ishomo}[3]{{\overline{{\rm{Hom}}}}_{#1}(#2, #3)} 
\newcommand{\thomo}[3]{{\rm{Hom}}^{#1}(#2, #3)} 
\newcommand{\rhomo}[3]{{\mathbf{R}}{\rm{Hom}}^{#1}(#2, #3)} 
\newcommand{\syz}[2]{\Omega ^{#1}(#2)} 
\newcommand{\thick}[1]{\mathsf{thick}#1} 
\newcommand{\KK}{\mathsf{K}} 
\renewcommand{\dim}[1]{{\rm{dim}}_{k}#1} 
\title{Mutating Brauer trees}
\author{Takuma Aihara}
\address{Division of Mathematical Science and Physics, Graduate School of Science and Technology, Chiba University, 
Yayoi-cho, Chiba 263-8522, Japan}
\email{taihara@math.s.chiba-u.ac.jp}
\keywords{}
\begin{abstract}
In this paper we introduce mutation of Brauer trees. 
We show that our mutation of Brauer trees explicitly describes the tilting mutation of Brauer tree algebras 
introduced by Okuyama and Rickard. 
\end{abstract}
\maketitle

Mutation plays an important role in representation theory, especially in tilting theory and cluster tilting theory. 
Cluster tilting theory deals with the combinatorial structure of 2-Calabi-Yau triangulated categories 
and is applied to categorification of Fomin-Zelevinsky cluster algebras.
This is closely related with tilting theory of hereditary algebras, 
and the class of tilting complexes is one of the most important classes from Morita theoretic viewpoint \cite{Ri2}. 
Now it is an important problem to study the combinatorial structure of tilting complexes for finite dimensional algebras. 
In this paper we consider this problem for Brauer tree algebras, 
which form one of the most basic classes of symmetric algebras. 
The main result of this paper is to describe explicitly the combinatorics of tilting mutation for Brauer tree algebras. 
This is given by mutation of Brauer trees, which is a new operation introduced in this paper. 

In Section \ref{pre} we recall a construction of tilting complexes $T$
of symmetric algebras $A$ which we call \emph{Okuyama-Rickard complexes}.
It was introduced by Okuyama and Rickard, and has been played an important role 
in the study of Brou\'{e}'s abelian defect group conjecture. 
Since it is a special case of tilting mutation as
we pointed out in [AI], we call the endomorphism algebra $\ehomo{\KK^{\rm b}(\proj{A})}{T}$
\emph{tilting mutation} of $A$.

In Section \ref{main} we introduce a combinatorial operation of Brauer trees which we call \emph{mutation of Brauer trees}. 
Our main result shows that tilting mutation of Brauer tree algebras which we explained above
is compatible with our mutation of Brauer trees. 
A special case was given in \cite{KZ} and \cite{Z} (see Remark \ref{KZ and Z}). 
As an application of our main result, we give braid-type relations for tilting mutation of Brauer tree algebras 
(Theorem \ref{tilting formula}). 

In Section \ref{proof} we prove our main result. 
Our proof is very simple and  seems to be interesting by itself.

\section{Preliminary}\label{pre}

Through this paper, let $A$ be a finite dimensional $k$-algebra for an algebraically closed field $k$ 
and we assume that $A$ is basic and indecomposable as an $A$-$A$-bimodule. 
Let $\{e_{1}, e_{2}, \cdots, e_{n}\}$ be a basic set of orthogonal local idempotents in $A$ and 
put $E=\{1, 2, \cdots, n\}$. 
For each $i\in E$, we set $P_{i}=e_{i}A$ and $S_{i}=P_{i}/{\rm{rad}}P_{i}$. 

We denote by $\modu{A}$ the category of finitely generated right $A$-modules, 
by $\proj{A}$ the full subcategory of $\modu{A}$ consisting of finitely generated projective right $A$-modules, 
by $\sta{A}$ the stable module category of $\modu{A}$ and 
by $\KK^{\rm{b}}(\proj{A})$ the homotopy category of bounded complexes over $\proj{A}$. 

Let us start with recalling the complex introduced by Okuyama and Rickard \cite{Ok, Ri}, 
which is a special case of silting mutation defined in \cite{AI}. 

\begin{definition}
Let $E_{0}$ be a subset of $E$ and put $e=\sum_{i \in E_{0}} e_i$. 
For any $i\in E$, we define a complex by 
\[T_{i}=\begin{cases}
\begin{array}{c}
\xymatrix@!R=0.5pt{
 (0{\rm{th}}) & (1{\rm{st}}) & \\
 P_{i} \ar[r] & 0 & (i\in E_{0})  \\
 Q_{i} \ar[r]_{\pi_{i}} & P_{i} & (i\not\in E_{0})
}
\end{array}
\end{cases}
\]
where $Q_{i}\xrightarrow{\pi_{i}}P_{i}$ is a minimal projective presentation of $e_{i}A/e_{i}AeA$. 
Now we define $T:=T(E_0):=\oplus_{i\in E} T_{i}$ and 
call it the \emph{Okuyama-Rickard complex with respect to $E_{0}$}. 
\end{definition}

The following observation shows the importance of Okuyama-Rickard complexes. 

\begin{prop}\label{prop:ok}\cite[Proposition 1.1]{Ok} 
If $A$ is a symmetric algebra, 
then any Okuyama-Rickard complex $T$ is tilting. 
In particular $\ehomo{\KK^{\rm{b}}(\proj{A})}{T}$ is derived equivalent to $A$.  
\end{prop}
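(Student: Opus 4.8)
The plan is to verify directly that $T$ satisfies the two defining conditions of a tilting complex in $\KK^{\rm b}(\proj{A})$ — self-orthogonality, $\homo{\KK^{\rm b}(\proj{A})}{T}{T[m]}=0$ for all $m\neq 0$, and generation, $\thick{T}=\KK^{\rm b}(\proj{A})$ — after which the derived equivalence $\ehomo{\KK^{\rm b}(\proj{A})}{T}\sim A$ follows from Rickard's Morita theory for derived categories. The one structural fact I would isolate at the outset is that $\operatorname{im}\pi_i=e_iAeA$ is exactly the trace $\operatorname{tr}_{\add{(eA)}}(P_i)$ of $\add{(eA)}$ in $P_i$, and that $\pi_i\colon Q_i\to e_iAeA$ is a projective cover with $Q_i\in\add{(eA)}$; this is what makes the vanishing computations go through.

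For self-orthogonality, first observe that $T$ is concentrated in cohomological degrees $0$ and $1$, so there are no nonzero chain maps $T\to T[m]$ once $|m|\ge 2$ and only $m=\pm1$ remain. Because $A$ is symmetric, its Nakayama functor $\nu$ is isomorphic to the identity on $\proj{A}$, giving a natural duality $\homo{\KK^{\rm b}(\proj{A})}{X}{Y}\cong D\,\homo{\KK^{\rm b}(\proj{A})}{Y}{X}$ for $X,Y\in\KK^{\rm b}(\proj{A})$; applying it to $X=T_i$, $Y=T_j[1]$ reduces the case $m=-1$ to $m=1$, so it suffices to show $\homo{\KK^{\rm b}(\proj{A})}{T_i}{T_j[1]}=0$ for all $i,j\in E$. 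I would split by membership in $E_0$. The cases in which $i\in E_0$ or $j\in E_0$ are either immediate from non-overlapping supports (when $i\notin E_0$, $j\in E_0$, and when $i,j\in E_0$) or reduce to $\homo{A}{P_i}{\operatorname{coker}\pi_j}=(e_jA/e_jAeA)e_i=0$ (when $i\in E_0$, $j\notin E_0$), using $e_jAe_i\subseteq e_jAeA$ for $i\in E_0$. The main case is $i,j\notin E_0$: a homotopy computation identifies $\homo{\KK^{\rm b}(\proj{A})}{T_i}{T_j[1]}$ with $\homo{A}{Q_i}{P_j}$ modulo $\pi_j\circ\homo{A}{Q_i}{Q_j}+\homo{A}{P_i}{P_j}\circ\pi_i$, and this vanishes because any map $Q_i\to P_j$ has image inside $\operatorname{tr}_{\add{(eA)}}(P_j)=e_jAeA=\operatorname{im}\pi_j$ (as $Q_i\in\add{(eA)}$) and hence lifts through the projective cover $\pi_j$.

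For generation, $P_i=T_i\in\thick{T}$ whenever $i\in E_0$; for $i\notin E_0$ the complex $T_i$ sits in the standard triangle $T_i\to Q_i\xrightarrow{\pi_i}P_i\to T_i[1]$ with $Q_i\in\add{(eA)}=\add{\bigoplus_{j\in E_0}T_j}\subseteq\thick{T}$, whence $P_i\in\thick{T}$. Thus every $P_i$ lies in $\thick{T}$, so $\thick{T}\supseteq\thick{A}=\KK^{\rm b}(\proj{A})$ and equality holds.

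I expect the main obstacle to be the self-orthogonality in negative degree, i.e. the vanishing of $\homo{\KK^{\rm b}(\proj{A})}{T}{T[-1]}$, since this is precisely where the hypothesis that $A$ is symmetric enters, through $\nu\cong\operatorname{id}$ and the resulting duality; the positive-degree vanishing, by contrast, is a clean lifting argument through the projective covers $\pi_i$. A more conceptual alternative would be to observe that $T$ is the silting mutation of $A$ at $\add{(eA)}$ in the sense of [AI], so $T$ is silting because $A$ is, and then to use that over a symmetric (hence self-injective) algebra every silting complex is tilting; the computations above would then be reorganized merely as the identification of this mutation with the Okuyama-Rickard complex $T$.
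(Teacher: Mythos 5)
Your proof is correct and complete. Note that the paper itself gives no argument for this proposition at all: it is quoted verbatim from Okuyama's unpublished preprint \cite[Proposition 1.1]{Ok}, so there is nothing in the text to compare against line by line. Your verification is the standard one and all the key points check out: $Q_i$ lies in $\add{(eA)}$ because it is the projective cover of $e_iAeA=\operatorname{tr}_{\add{(eA)}}(P_i)$; the degree reasons dispose of $|m|\ge 2$ and of the cases where exactly one of $i,j$ contributes a stalk complex in the wrong position; the identification of $\homo{\KK^{\rm b}(\proj{A})}{T_i}{T_j[1]}$ with $\homo{A}{Q_i}{P_j}$ modulo $\pi_j\circ\homo{A}{Q_i}{Q_j}+\homo{A}{P_i}{P_j}\circ\pi_i$ is the right homotopy computation, and the lifting of any $Q_i\to P_j$ through $\pi_j$ (image contained in $e_jAeA$ plus projectivity of $Q_i$) kills it; the case $i\in E_0$, $j\notin E_0$ correctly reduces to $(e_jA/e_jAeA)e_i=0$ since $e_jAe_i\subseteq e_jAeA$; the Serre-duality reduction of $m=-1$ to $m=1$ is exactly where symmetry of $A$ is used and is where the statement would fail for general algebras (as the paper itself remarks, one only gets a silting complex then); and the triangle $T_i\to Q_i\xrightarrow{\pi_i}P_i\to T_i[1]$ gives generation. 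Your closing alternative --- recognizing $T$ as the silting mutation of $A$ at $\add{(eA)}$ in the sense of \cite{AI} and invoking that silting equals tilting over self-injective algebras --- is precisely the viewpoint the paper adopts in its surrounding discussion, so either route is consistent with the text.
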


If we drop the assumption that $A$ is symmetric, 
an Okuyama-Rickard complex is not necessarily a tilting complex but still a silting complex. 

\begin{definition}
Let $B$ be a finite dimensional $k$-algebra. 
For any $i\in E$, we say that $B$ is the \emph{tilting mutation of $A$ with respect to $i$} 
if $B\simeq \ehomo{\KK^{\rm b}(\proj{A})}{T(E\backslash \{i\})}$ and 
write $A\xrightarrow{i} B$ or $B=\mu_{i}(A)$.
\end{definition}

The aim of this paper is to introduce mutation of Brauer trees 
and study the relationship with tilting mutation of Brauer tree algebras. 

Let us recall the definitions of Brauer trees and Brauer tree algebras 

\begin{definition}\cite{Alp, GR}
A \emph{Brauer graph} $G$ is a finite connected graph, together with the following data: 
\begin{enumerate}[(i)]
\item There exists a cyclic ordering of the edges adjacent to each vertex, 
usually described by the clockwise ordering given by a fixed planar representation of $G$;
\item For each vertex $v$, there exists a positive integer $m_{v}$ assigned to $v$, called the \emph{multiplicity}. 
We call a vertex $v$ \emph{exceptional} if $m_{v}>1$. 
\end{enumerate}
A \emph{Brauer tree} $G$ is a Brauer graph which is a tree and having at most one exceptional vertex. 

A \emph{Brauer tree algebra} $A=A_G$ is a basic algebra given by a Brauer tree $G$ as follows: 
\begin{enumerate}[(i)]
\item There exists a one-to-one correspondence between simple $A$-modules $S_{i}$ and edges $i$ of $G$; 
\item For any edge $i$ of $G$, 
the projective indecomposable $A$-module $P_{i}$ has ${\rm{soc}}(P_{i})\simeq P_{i}/{\rm{rad}}(P_{i})$ 
and ${\rm{rad}}(P_{i})/{\rm{soc}}(P_{i})$ is the direct sum of two uniserial modules 
whose composition factors are, for the cyclic ordering $(i, i_{1}, \cdots, i_{a}, i)$ of the edges adjacent to a vertex $v$, 
$S_{i_{1}}, \cdots, S_{i_{a}}, S_{i}, S_{i_{1}}, \cdots, S_{i_{a}}$ (from the top to the socle) 
where $S_{i}$ appears $m_{v}-1$ times. 
\end{enumerate}
\end{definition}

Note that $A_G$ is uniquely determined by $G$ up to isomorphism. 
Moreover it is a symmetric algebra. 

We say that a Brauer tree $G$ is a \emph{star} if there exists a vertex $v$ of $G$ such that 
all edges of $G$ appear in the cyclic ordering adjacent to $v$. 

The following well-known result shows that derived equivalence classes of Brauer tree algebras 
are determined by certain numerical invariants.

\begin{thm}\label{thm:star} \cite[Theorem 4.2]{Ri}
\begin{enumerate}[{\rm (1)}]
\item For any Brauer tree algebra $A$, 
there exists a tilting complex $P\in\KK^{\rm{b}}(\proj{A})$ such that the endomorphism algebra $\ehomo{\KK^{\rm{b}}(\proj{A})}{P}$ 
of $P$ is a Brauer tree algebra for a star with exceptional vertex in the center if it exists. 
\item In particular, derived equivalence classes of Brauer tree algebras 
are determined by the number of the edges and the multiplicities of the vertices. 
\end{enumerate}
\end{thm}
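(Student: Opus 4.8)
The plan is to reduce everything to part (1), since (2) is a formal consequence: if every Brauer tree algebra is derived equivalent to the star algebra determined by its number of edges and its exceptional multiplicity, then two Brauer tree algebras sharing these invariants are both derived equivalent to one and the same star algebra (a star is uniquely determined up to isomorphism of the associated algebra by the number of edges and the multiplicity of its central vertex), and transitivity of derived equivalence finishes the argument. So the real content is the construction in (1).

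For (1), fix a Brauer tree $G$ and let $v_{0}$ be its exceptional vertex if one exists and an arbitrary vertex otherwise; this $v_{0}$ will become the center of the target star $G_{0}$. Since $A=A_{G}$ is symmetric, Proposition \ref{prop:ok} guarantees that any Okuyama-Rickard complex over $A$ is tilting, and composites of the resulting derived equivalences are again realized by tilting complexes. The first approach I would take is to fold $G$ onto the star by a finite sequence of single-edge tilting mutations $A \xrightarrow{i} \mu_{i}(A)$: orienting every edge away from $v_{0}$ and measuring complexity by $c(G)=\sum_{i}d(i)$, where $d(i)$ is the graph distance from $v_{0}$ to the endpoint of edge $i$ farther from $v_{0}$, I would exhibit, whenever $G$ is not already a star, an edge whose mutation strictly decreases $c$. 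Iterating produces a chain of derived equivalences terminating at $A_{G_{0}}$, and the composite tilting complex is the desired $P$.

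Equivalently, and avoiding any appeal to how a single mutation reshapes the tree, I would write the tilting complex down directly: for each edge $i$, let $T_{i}^{\bullet}$ be the bounded complex of projectives supported on the indecomposable projectives $P_{j}$ attached to the edges $j$ lying on the unique path from $v_{0}$ to $i$, placed in cohomological degrees prescribed by their distance from $v_{0}$ and glued by the radical inclusions, and set $P=\bigoplus_{i}T_{i}^{\bullet}$. One then verifies the two tilting conditions: that $\homo{\KK^{\rm b}(\proj{A})}{P}{P[m]}=0$ for all $m\neq 0$, and that $\thick{P}=\KK^{\rm b}(\proj{A})$. Both are read off from the uniserial structure of $\mathrm{rad}(P_{i})/\mathrm{soc}(P_{i})$ dictated by the cyclic orderings at the vertices. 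Finally one computes $\ehomo{\KK^{\rm b}(\proj{A})}{P}$ and matches its quiver and relations against those of the star algebra $A_{G_{0}}$, whose central vertex carries multiplicity $m_{v_{0}}$.

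The main obstacle is exactly this verification step — establishing self-orthogonality of $P$ (the vanishing of $\mathrm{Hom}$ in all nonzero degrees) together with the identification of the endomorphism algebra — since it requires tight bookkeeping of how the composition factors of the projectives propagate along the paths in $G$. In the inductive formulation the same difficulty reappears as the need to control the combinatorial effect of each elementary mutation on the shape of the tree; this is precisely the phenomenon that the mutation of Brauer trees introduced in Section \ref{main} is designed to make transparent, so that the bookkeeping becomes a purely graph-theoretic operation.
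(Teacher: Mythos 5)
This theorem is not proved in the paper at all: it is quoted from Rickard \cite[Theorem 4.2]{Ri}, and everything downstream of it (Proposition \ref{prop:closed under derived}, hence the proof of Theorem \ref{thm:mutation of Brauer tree}) depends on it. That creates a concrete problem for the first route you propose. Folding $G$ onto the star by iterated single-edge tilting mutations, with your complexity function $c(G)$ as the termination measure, requires knowing how each mutation reshapes the Brauer tree, i.e.\ it requires Theorem \ref{thm:mutation of Brauer tree}; this is exactly what the paper does later, in Lemma \ref{mutating to star} and Corollary \ref{cor:star length 2}, to obtain a \emph{strengthening} of the present theorem. But the paper's proof of Theorem \ref{thm:mutation of Brauer tree} invokes Proposition \ref{prop:closed under derived}, whose proof opens by applying Theorem \ref{thm:star}. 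So within the logical architecture of this paper your first approach is circular: before using mutations you would have to establish independently that $\mu_i(A_G)$ is again a Brauer tree algebra with tree $\mu_i(G)$, which is the main theorem itself.

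Your second route is non-circular and is in fact Rickard's actual argument: the complex supported on the projectives $P_j$ along the path from $v_0$ to $i$, placed in degrees given by distance and glued by the maps coming from the uniserial structure, is precisely the tilting complex of \cite[Theorem 4.2]{Ri}. But the two verifications you defer as ``the main obstacle'' --- the vanishing of $\homo{\KK^{\rm b}(\proj{A})}{P}{P[m]}$ for $m\neq 0$, the generation of $\KK^{\rm b}(\proj{A})$, and the identification of $\ehomo{\KK^{\rm b}(\proj{A})}{P}$ with the star algebra --- are not bookkeeping to be postponed; they constitute the entire content of the theorem, and your sketch gives no indication of how the cancellations in the Hom-complexes actually occur. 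Your deduction of (2) from (1) is correct and standard. As it stands, then, the proposal is a plan whose decisive step is missing in both variants, and whose first variant cannot be carried out with the tools of this paper without circularity.
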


\section{Mutating Brauer trees}\label{main}

Let us start with the definition of mutation of Brauer trees. 

\begin{definition}\label{mutation of Brauer tree}
Let $G$ be a Brauer tree and $i$ be an edge of $G$. 
We define a Brauer tree $\mu_i(G)$ which is called \emph{mutation} of $G$ with respect to $i$ as follows.
\begin{enumerate}[{\rm (1)}]
\item The edge $i$ is an internal edge:
Let $(i, i_1,\cdots,i_a,i)$ and $(i,j_1,\cdots,j_b,i)$ be the cyclic orderings containing $i$. 
Let $(i_1,g_1,\cdots,g_c,i_1)$ and $(j_1,h_1,\cdots,h_d,j_1)$ be the cyclic orderings containing $i_1$ and $j_1$, 
which do not contain $i$. 
\[
\begin{array}{rl}
G&:=
\framebox{$
\begin{array}{c}
\xymatrix{
&&& &  & & \circ \ar@{-}[ld]^{}="d1"_{h_1}   \\
&\circ \ar@{-}[rd]^{i_a}_{}="a1" &       &    &   & \circ \ar@{-}[ld]_{j_1}^{}="b1" & \\
\circ \ar@{-}[dr]^{g_c}_{}="c1" &  & \circ \ar@{-}[rr]^{i}& & \circ &  & \circ \ar@{-}[ul]^{h_d}_{}="d2" \\
&\circ \ar@{-}[ru]_{i_1}^{}="a2"  &       &&       & \circ \ar@{-}[lu]^{j_b}_{}="b2" & \\
\circ \ar@{-}[ru]^{}="c2"_{g_1} &&&&&&
\ar@{.}@/^/"a2";"a1" 
\ar@{.}@/^/"b1";"b2"
\ar@{.}@/^/"d1";"d2"
\ar@{.}@/^/"c2";"c1"
}
\end{array}$}\\ 
\mu_i(G)&:=
\framebox{$
\begin{array}{c}
\xymatrix{
&&& &  & & \circ \ar@{-}[ld]^{}="d1"_{h_1}   \\
&\circ \ar@{-}[rd]^{i_a}_{}="a1" & && & \circ \ar@{-}[ld]^{j_1}="b1" \ar@{-}[ddllll]_{i} &\\
\circ \ar@{-}[dr]^{g_c}_{}="c1"& & \circ & & \circ && \circ \ar@{-}[ul]^{h_d}_{}="d2" \\
&\circ \ar@{-}[ru]^{i_1}="a2" & && & \circ \ar@{-}[lu]^{j_b}_{}="b2" & \\
\circ \ar@{-}[ru]^{}="c2"_{g_1} &&&&&&
\ar@{.}@/^/"a2";"a1" 
\ar@{.}@/^/"b1";"b2"
\ar@{.}@/^/"d1";"d2"
\ar@{.}@/^/"c2";"c1"
}
\end{array}$}
\end{array}\]
The multiplicities of vertices do not change.

\item The edge $i$ is an external edge: 
Let $(i,j_1,\cdots,j_b,i)$ be the cyclic ordering containing $i$. 
Let $(j_1,h_1,\cdots,h_d,j_1)$ be the cyclic ordering containing $j_1$, 
which does not contain $i$. 
\[
\begin{array}{rl}
G&:=
\framebox{$
\begin{array}{c}
\xymatrix{
&&&& \circ \ar@{-}[ld]^{}="d1"_{h_1}   \\
&    &   & \circ \ar@{-}[ld]_{j_1}^{}="b1" & \\
 \circ \ar@{-}[rr]^{i}& & \circ &  & \circ \ar@{-}[ul]^{h_d}_{}="d2" \\
&&       & \circ \ar@{-}[lu]^{j_b}_{}="b2" & 
\ar@{.}@/^/"b1";"b2"
\ar@{.}@/^/"d1";"d2"
}\end{array}$} \\
\mu_i(G)&:=
\framebox{$
\begin{array}{c}
\xymatrix{
& &  & & \circ \ar@{-}[ld]^{}="d1"_{h_1}   \\
 && & \circ \ar@{-}[ld]^{j_1}="b1" \ar@{-}[dlll]_{i} &\\
 \circ & & \circ && \circ \ar@{-}[ul]^{h_d}_{}="d2" \\
 && & \circ \ar@{-}[lu]^{j_b}_{}="b2" & 
\ar@{.}@/^/"b1";"b2"
\ar@{.}@/^/"d1";"d2"
}\end{array}$}
\end{array}\]
The multiplicities of vertices do not change.
\end{enumerate}
\end{definition}

Now we state the main theorem in this paper. 

\begin{thm}\label{thm:mutation of Brauer tree}
For any Brauer tree $G$ and any edge $i$ of $G$, 
we have an isomorphism $\mu_i(A_G)\simeq A_{\mu_i(G)}$ of $k$-algebras, 
sending each idempotent $e_j$ to $e_j$. 
\end{thm}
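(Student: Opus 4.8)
The plan is to compute the endomorphism algebra $B:=\ehomo{\KK^{\rm b}(\proj{A_G})}{T}$ of the Okuyama--Rickard complex $T=T(E\backslash\{i\})$ and to identify it with $A_{\mu_i(G)}$ through the idempotents $\mathrm{id}_{T_j}$. First I would make $T$ fully explicit. Here $e=1-e_i$, so $e_iAeA$ is the trace in $P_i$ of the remaining projectives $\bigoplus_{j\neq i}P_j$; reading the biserial Loewy structure of $P_i$ off the definition of $A_G$, this trace is the submodule of $\mathrm{rad}(P_i)$ generated by those uniserial arms whose top is different from $S_i$, i.e.\ the arms toward $i_1$ and $j_1$. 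Thus, for an internal edge, $e_iA/e_iAeA\cong S_i$ and $T_i=(P_{i_1}\oplus P_{j_1}\xrightarrow{\pi_i}P_i)$; for an external edge only the arm toward $j_1$ occurs, so $T_i=(P_{j_1}\xrightarrow{\pi_i}P_i)$, with $e_iA/e_iAeA$ a uniserial $S_i$-module whose length equals the multiplicity of the leaf vertex. For every $j\neq i$ one has $T_j=P_j$ concentrated in degree $0$.

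Next I would compute all graded morphism spaces $\homo{\KK^{\rm b}(\proj{A_G})}{T_j}{T_{j'}}$ together with their composition, in order to read off the quiver and the radical (Loewy) structure of $B$. Proposition \ref{prop:ok} guarantees that $T$ is tilting, so $B$ is derived equivalent to $A_G$; in particular it has the same number of simple modules (edges) and the same multiset of multiplicities as $A_G$, these being derived invariants by Theorem \ref{thm:star}(2), and it is again symmetric. This fixes the numerical data, so the genuine work is to pin down the \emph{planar/cyclic} data of $B$. For this I would determine the radical series of the indecomposable projective $B$-modules $\homo{\KK^{\rm b}(\proj{A_G})}{T}{T_j}$, focusing only on the edges occurring in the two cyclic orderings $(i,i_1,\dots,i_a,i)$ and $(i,j_1,\dots,j_b,i)$ around the endpoints of $i$ and in the orderings around $i_1$ and $j_1$; away from this region $T$ is a direct sum of stalk complexes and the corresponding projectives are visibly unchanged.

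The heart of the argument is this local computation. I would show, by resolving and tracking homotopies, that the induced maps between $T_i$ and the neighbouring $T_{i_1},T_{j_1},T_{i_a},T_{j_b}$ reassemble the arms exactly so that in $B$ the edge $i$ is detached from its two endpoints and reattached as in Definition \ref{mutation of Brauer tree}: for an internal edge it joins the far vertex of $i_1$ to the far vertex of $j_1$, and for an external edge it slides to the far vertex of $j_1$ while keeping its leaf, in each case inserted into the cyclic orderings in the precise slots of the pictures, with all multiplicities preserved. Matching the transformed local structure with the untouched remainder yields an isomorphism of the radical series of the projective $B$-modules with those of $A_{\mu_i(G)}$, and the compositions computed above endow them with the same module structure; since both algebras are basic, this produces the desired $k$-algebra isomorphism sending each $\mathrm{id}_{T_j}$ to $e_j$.

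I expect the main obstacle to be the honest bookkeeping of the Hom-complexes involving the two-term summand $T_i$ --- in particular $\ehomo{\KK^{\rm b}(\proj{A_G})}{T_i}$ and the mixed spaces $\homo{\KK^{\rm b}(\proj{A_G})}{T_i}{T_{i_1}}$ and $\homo{\KK^{\rm b}(\proj{A_G})}{T_{j_1}}{T_i}$ --- where one must check that the induced maps are precisely the generators of the new arrows and that the expected relations, and no others, hold up to homotopy. The external-edge case and, especially, an exceptional vertex adjacent to $i$, where $e_iA/e_iAeA$ is a longer uniserial $S_i$-module, will require the most care; the invariance of the multiplicities under the mutation is the natural consistency check supplied by Theorem \ref{thm:star}(2).
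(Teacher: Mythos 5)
Your setup is correct (the identification of $e_iA/e_iAeA$ and hence of $T_i$ as $P_{i_1}\oplus P_{j_1}\to P_i$, resp.\ $P_{j_1}\to P_i$ in the external case, matches the paper), but your route from there is genuinely different from, and substantially heavier than, the one the paper takes. You propose to compute the full graded Hom-spaces between the summands of $T$ \emph{together with their compositions up to homotopy}, so as to reconstruct the quiver and the radical series of $B=\ehomo{\KK^{\rm b}(\proj{A_G})}{T}$ directly; you yourself flag this homotopy bookkeeping as the main obstacle, and indeed that central local computation is only asserted in your write-up, not carried out. The paper sidesteps it entirely by two observations you do not use. First, Proposition \ref{prop:closed under derived} (via Theorem \ref{thm:star} and the Auslander--Reiten--Smal\o\ result on stable equivalences with symmetric Nakayama algebras) shows \emph{in advance} that $B$ is again a Brauer tree algebra; second, Method \ref{method} shows that a Brauer tree algebra is already pinned down by its Cartan matrix together with the dimensions $\dim\ext{B}{1}{S^B_\ell}{S^B_m}$. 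Hence only additive invariants are needed: the Cartan numbers come from the alternating-sum formula for $\homo{\KK^{\rm b}(\proj{A_G})}{T_\ell}{T_m}$ (no composition law required), and the $\ext{B}{1}{-}{-}$ are transported through the stable equivalence of Lemma \ref{lemma:ok_stable}, which identifies the simple $B$-modules with $\syz{}{S_i}$, $Y_{i_1}$, $Y_{j_1}$ and the remaining $S_\ell$. Your approach buys a hands-on description of the new algebra (it is essentially how the special cases in \cite{KZ} and \cite{Z} were treated), but as written it leaves precisely the hardest step --- verifying the relations hold, and no others, up to homotopy --- unexecuted; if you want to complete it along your lines, that is where the real work lies, whereas adopting the paper's two reductions would let you avoid it altogether.
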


We prove the theorem above in the next section. 
This can be regarded as an analogue of derived equivalences associated with Bernstein-Gelfand-Ponomarev reflection of quivers 
\cite{BGP} and Derksen-Weyman-Zelevinsky mutation of quivers with potentials \cite{DWZ, BIRS}. 

\begin{remark}\label{KZ and Z}
Note that a special case of Theorem \ref{thm:mutation of Brauer tree} was given in \cite{KZ} and \cite{Z}, 
where they only considered the case (2) in Definition \ref{mutation of Brauer tree} such that 
there are no exceptional vertices.
\end{remark}

\begin{ex}
We give graphs of mutation of Brauer trees. 
\begin{enumerate}[(1)]
\item Brauer trees with 3 edges and without exceptional vertices: 
\[\xymatrix{
      & \circ \ar@{-}[d]&       & &       &       &       &\\
      & \circ \ar@{-}[dr]\ar@{-}[dl] &       & & \circ \ar@{-}[r] & \circ \ar@{-}[r] & \circ \ar@{-}[r] & \circ \\
\circ &       & \circ & &       &&&
\save "1,1"."3,3"*[F]\frm{}="a" \restore
\save "2,5"."2,8"*[F]\frm{}="b" \restore
\ar@<0.2em>"a";"b"
\ar@<0.3em>"b";"a"
\ar@(ru, dr) "2,8"+<0.8em,0.3em>;"2,8"+<0.8em, -0.3em>
}\]
\item Brauer trees with 4 edges and without exceptional vertices:
\[\xymatrix{
      & \circ \ar@{-}[d] &       & &       & \circ \ar@{-}[d] &       & &       &       &       &       &       \\
\circ \ar@{-}[r] & \circ \ar@{-}[r] \ar@{-}[d] & \circ & &  & \circ \ar@{-}[d] &       & & 
\circ \ar@{-}[r] & \circ \ar@{-}[r] & \circ \ar@{-}[r] & \circ \ar@{-}[r] & \circ \\
      & \circ &       & & \circ \ar@{-}[r] & \circ \ar@{-}[r]  & \circ & &       &       &       &       & \\
\save "1,1"."3,3"*[F]\frm{}="a"\restore
\save "1,5"."3,7"*[F]\frm{}="b" \restore
\save "2,9"."2,13"*[F]\frm{}="c"\restore 
\ar@<-2em>"a";"b"
\ar@<2.5em>"b";"a"
\ar@<0.2em>"b";"c"
\ar@<0.3em>"c";"b"
\ar@(dl, dr) "3,6"+<-0.4em, -0.7em>;"3,6"+<0.4em, -0.7em>
}\]
\item Brauer trees with 5 edges and without exceptional vertices:
\[\xymatrix{
      & \circ \ar@{-}[d] &       & &       & \circ \ar@{-}[r] & \circ \ar@{-}[d]\ar@{-}[r] & \circ &       & &       \\
\circ & \circ \ar@{-}[l]\ar@{-}[ld]\ar@{-}[r]\ar@{-}[rd] & \circ & & & \circ \ar@{-}[r] & \circ \ar@{-}[r] & \circ & & 
& \circ \ar@{-}[d] \\
\circ &       & \circ & &       &       &       &       &       & & \circ \ar@{-}[d] \\
      &       &       & &       & \circ \ar@{-}[d] &       &       &       & & \circ \ar@{-}[d] \\
      & \circ \ar@{-}[d] &       & & \circ \ar@{-}[r] & \circ \ar@{-}[r] & \circ \ar@{-}[r] & \circ \ar@{-}[r] & \circ & 
& \circ \ar@{-}[d] \\
\circ & \circ \ar@{-}[d]\ar@{-}[l]\ar@{-}[r] & \circ & &       &       &       &       &       & & \circ \ar@{-}[d] \\
      & \circ \ar@{-}[d] &       & &       &       & \circ \ar@{-}[d] &       &       & & \circ \\
      & \circ &       & & \circ \ar@{-}[r] & \circ \ar@{-}[r] & \circ \ar@{-}[r] & \circ \ar@{-}[r] & \circ & &       \\
\save "1,1"."3,3"*[F]\frm{}="a"\restore
\save "5,1"."8,3"*[F]\frm{}="b"\restore
\save "1,6"."2,8"*[F]\frm{}="c"\restore
\save "4,5"."5,9"*[F]\frm{}="d"\restore
\save "7,5"."8,9"*[F]\frm{}="e"\restore
\save "2,11"."7,11"*[F]\frm{}="f"\restore
\ar@<2.7em> "a";"b"
\ar@<-3.4em> "b";"a"
\ar@<0.5em> "5,3";"c"
\ar@<0.3em> "c"+<-10pt, -10pt>;"5,3"
\ar "6,3";"d"
\ar@<0.7em> "d"+<-9pt, -7pt>;"6,3"+<-1pt, 1pt> 
\ar "7,3";"e"
\ar@<0.6em> "e";"7,3"
\ar@<-0.4em> "2,7";"4,7"
\ar@<-0.4em> "4,7";"2,7"
\ar@<-0.4em> "5,7";"7,7"
\ar@<-0.4em> "7,7";"5,7"
\ar@<0.3em> "c";"f"
\ar@<0.4em> "f";"c"
\ar "4,9"+<7pt, 0pt>;"4,11"
\ar@<0.6em> "4,11";"4,9"+<7pt, 0pt>
\ar "7,9"+<7pt, 0pt>;"6,11"
\ar@<0.7em> "6,11"+<-11pt,-6pt>;"7,9"+<3pt, -3pt>
\ar@<0.3em>@(dl, dr) "8,7"+<-0.4em, -1em>;"8,7"+<0.4em, -1em>
\ar@(r, d) "5,9"+<0.8em, 0em>;{"5,9"+<0em, -0.7em>}
}\]
\item Brauer trees with 3 edges and exceptional vertex $\bullet$: 
\[\xymatrix{
      & \circ \ar@{-}[d]   &       & & \circ \ar@{-}[d]   & &         &        & \bullet \ar@{-}[d] &       \\ 
      & \bullet \ar@{-}[dl] \ar@{-}[dr] &       & & \bullet \ar@{-}[d] & &         &        & \circ \ar@{-}[dl] \ar@{-}[dr]  &       \\
\circ &         & \circ & & \circ \ar@{-}[d]   & &         & \circ  &         & \circ \\ 
      &         &       & & \circ  & &         &        &         &       \\
      &         &       & &         & & \bullet \ar@{-}[r] & \circ \ar@{-}[r]  & \circ \ar@{-}[r]  & \circ 
\save "1,1"."3,3"*[F]\frm{}="a"\restore
\save "1,5"."4,5"*[F]\frm{}="b"\restore
\save "1,8"."3,10"*[F]\frm{}="c"\restore
\save "5,7"."5,10"*[F]\frm{}="d"\restore
\save "3,5"+<7pt,0pt>="e"\restore
\save "3,5"+<7pt,-0.8em>="f"\restore
\save "d"+<-0.6em, 0em>="g"\restore
\save "3,9"+<0em, -0.5em>="h"\restore
\save "5,9"="i"\restore
\ar@<-2em> "a";"b"
\ar@<2.5em> "b";"a"
\ar@<-2em> "b";"c"
\ar@<2.5em> "c";"b" 
\ar@<0.1em> "e";"d"
\ar "g";"f"
\ar@<0.3em> "h";"i"
\ar@<0.3em> "i";"h"
}\]
\end{enumerate}
\end{ex}

We give some applications of Theorem \ref{thm:mutation of Brauer tree}. 

For edges $i$ and $j$ in a Brauer tree we say that $j$ \emph{follows} $i$ 
if there exists a cyclic ordering of the form $(\cdots, i, j, \cdots)$. 

\begin{thm}\label{tilting formula}
Let $A$ be a Brauer tree algebra. 
Then for any edges $i, j$ of the Brauer tree of $A$, we have the following relations:
\begin{enumerate}[$(1)$]
\item $(\mu_{i})^{s}(A)\simeq A$ for some positive integer $s$;
\item $\mu_{j}\mu_{i}(A)\simeq \mu_{i}\mu_{j}(A)$ if $i$ does not follow $j$ and $j$ does not follow $i$; 
\item $\mu_{i}\mu_{j}\mu_{i}(A)\simeq \mu_{i}\mu_{j}(A)$ if $i$ does not follow $j$ and $j$ follows $i$;
\item $\mu_i\mu_j\mu_i(A)\simeq \mu_j\mu_i\mu_j(A)$ if $i$ follows $j$ and $j$ follows $i$.
\end{enumerate}
\end{thm}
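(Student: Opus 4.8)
The plan is to transport the whole statement to the combinatorial side via Theorem~\ref{thm:mutation of Brauer tree} and then verify each relation by inspecting Brauer trees. The essential point is that the isomorphism $\mu_i(A_G)\simeq A_{\mu_i(G)}$ of Theorem~\ref{thm:mutation of Brauer tree} sends $e_j$ to $e_j$ for every $j$. Consequently it induces an equivalence $\KK^{\rm b}(\proj{\mu_i(A_G)})\simeq\KK^{\rm b}(\proj{A_{\mu_i(G)}})$ matching the idempotents, and hence the Okuyama--Rickard complexes $T(E\setminus\{j\})$ on the two sides; so mutating $\mu_i(A_G)$ at the edge $j$ yields $A_{\mu_j(\mu_i(G))}$. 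Iterating, for any word $w=\mu_{i_k}\cdots\mu_{i_1}$ one obtains $w(A_G)\simeq A_{w(G)}$, where on the right $w$ now denotes the corresponding composite of tree mutations from Definition~\ref{mutation of Brauer tree}. Since isomorphic Brauer trees give isomorphic Brauer tree algebras, it therefore suffices to establish, for Brauer trees with fixed edge set $E$ and fixed multiplicity function, the four isomorphisms $(\mu_i)^s(G)\cong G$, $\mu_j\mu_i(G)\cong\mu_i\mu_j(G)$, $\mu_i\mu_j\mu_i(G)\cong\mu_i\mu_j(G)$, and $\mu_i\mu_j\mu_i(G)\cong\mu_j\mu_i\mu_j(G)$ under the respective hypotheses.

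For (1) I would argue by finiteness. The collection of Brauer trees with edge set $E$ and a prescribed assignment of multiplicities is finite, and mutation at $i$ preserves both data: the multiplicities are unchanged by definition, and the edge set is only re-embedded. From the explicit local pictures in Definition~\ref{mutation of Brauer tree} the move repositioning the edge $i$ can be undone, so $\mu_i$ is a bijection of this finite set and hence has finite order $s$. This gives $(\mu_i)^s(G)=G$, and therefore $(\mu_i)^s(A)\simeq A$.

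For (2), (3), and (4) I would analyze the local configuration around the relevant vertices. Recall that $j$ \emph{follows} $i$ exactly when $i$ and $j$ meet at a common vertex and are cyclically adjacent there in the order $(\cdots,i,j,\cdots)$; which of the two ``follows'' conditions hold governs how much the cyclic orderings invoked by the mutations at $i$ and at $j$ overlap. In case (2), where neither edge follows the other, the data used to mutate at $i$ and the data used to mutate at $j$ involve disjoint pieces of the tree, so the two moves act independently and commute; I would confirm $\mu_j\mu_i(G)\cong\mu_i\mu_j(G)$ by comparing the two results locally, obtaining in fact an equality of labelled trees. In cases (3) and (4) the edges interact at a shared vertex and the moves no longer commute, so I would compute both sides of the asserted identity by drawing the local diagram through each successive mutation and then exhibiting a matching (in general label-permuting) isomorphism of Brauer trees between the two final configurations.

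The main obstacle is the bookkeeping in (3) and (4). After the first mutation the cyclic orderings around the vertex shared by $i$ and $j$ are rearranged, so one must track carefully how the ``follows'' relation between $i$ and $j$ evolves, and how the moved edge is re-embedded, before the next mutation is applied; it is exactly here that the hypotheses distinguishing (3) from (4) enter. One must also run the internal-edge and external-edge cases of Definition~\ref{mutation of Brauer tree} in parallel and check that the exceptional-vertex multiplicities are inert throughout, since mutation leaves them fixed. Once the relevant local diagrams are drawn, each of the three identities reduces to a routine edge-by-edge comparison of cyclic orderings and embeddings.
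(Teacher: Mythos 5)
Your proposal is correct and follows essentially the same route as the paper: reduce to isomorphisms of Brauer trees via Theorem \ref{thm:mutation of Brauer tree}, prove (1) by regarding mutation as a permutation of the finite set of labeled Brauer trees with fixed edge number and multiplicities, and verify (2)--(4) by tracking the local cyclic orderings through each successive mutation (the paper does this with explicit diagrams). No essential difference in method.
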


\begin{proof}
Let $G$ be the Brauer tree of $A$. 
By Theorem \ref{thm:mutation of Brauer tree}, 
we only have to show the corresponding isomorphisms of Brauer trees for $G$. 
\begin{enumerate}[$(1)$]
\item We denote by ${\rm{Br}}(n, m)$ the set of labeled Brauer trees 
which have $n$ edges and multiplicity $m$ of the exceptional vertex. 
We can regard mutation as group action to ${\rm{Br}}(n, m)$. 
Since ${\rm{Br}}(n, m)$ is a finite set, 
the order of mutation is finite. 
\item This assertion can be checked easily. 
\item Let $j'$ follow $i$ being not $j$ and $j_{1}, j_{2}$ follow $j$. 
We have the following mutation: 
\[\xymatrix{
\circ \ar@{-}[r]^{j'} & \circ \ar@{-}[d]_{i} & \circ \ar@{-}[dr]^{j_1} &       
& & \circ \ar@{-}[r]^{j'} \ar@{-}[drrr]^{i} & \circ & \circ \ar@{-}[dr]^{j_1} &       
& & \circ \ar@{-}[r]^{j'} \ar@{-}[drrr]^{i} \ar@{-}@/^1pc/[ddrr]^(0.6){j} & \circ & \circ \ar@{-}[dr]^{j_1} &       \\
      & \circ \ar@{-}[rr]^{j} \ar@{-}[dr]_{j_2} &       & \circ 
& &       & \circ \ar@{-}[rr]_{j} \ar@{-}[rd]_{j_2} &       & \circ 
& &       & \circ \ar@{-}[dr]_{j_2} &       & \circ \\
      &       & \circ &       & &       &       & \circ &       & &       &       & \circ &       \\
      &       &       &       & &       &       &       &       & &       &       &       &       \\
\circ \ar@{-}[r]^{j'} & \circ \ar@{-}[d]_{i} & \circ \ar@{-}[rd]^{j_1} \ar@{-}[dd]^{j} &       
& & \circ \ar@{-}[r]^{j'} \ar@{-}@/^1pc/[rrdd]^{i} & \circ & \circ \ar@{-}[dd]^{j} \ar@{-}[dr]^{j_1} &       
& & \circ \ar@{-}[r]^{j'} \ar@{-}@/^1pc/[ddrr]^{j} & \circ & \circ \ar@{-}[rd]^{j_1} \ar@{-}[dd]^{i} &       \\
      & \circ \ar@{-}[dr]_{j_2} &       & \circ 
& &       & \circ \ar@{-}[dr]_{j_2} &       & \circ 
& &       & \circ \ar@{-}[dr]_{j_2} &       & \circ \\
      &       & \circ &       & &       &       & \circ &       & &       &       & \circ &   
\save "1,1"+<-0.5em, 1.2em>."3,4"*[F]\frm{}\restore
\save "1,6"+<-0.5em, 1.2em>."3,9"*[F]\frm{}\restore 
\save "1,11"+<-0.5em, 1.2em>."3,14"*[F]\frm{}\restore
\save "5,1"+<-0.5em, 1.2em>."7,4"*[F]\frm{}\restore
\save "5,6"+<-0.5em, 1.2em>."7,9"*[F]\frm{}\restore
\save "5,11"+<-0.5em, 1.2em>."7,14"*[F]\frm{}\restore
\ar "2,4"+<0.3em,0em>;{"2,6"+<-0.5em,0em>}^{i}
\ar "2,9"+<0.3em,0em>;{"2,11"+<-0.5em,0em>}^{j}
\ar@<-1.5em> "3,13"+<0em,-0.4em>;{"5,13"+<0em,1.2em>}^{i}
\ar@<-1.5em> "3,3"+<0em,-0.4em>;{"5,3"+<0pt, 1.2em>}_{j}
\ar "6,4"+<0.3em,0em>;{"6,6"+<-0.5em,0em>}_{i}
\ar "6,9"+<0.3em,0em>;{"6,11"+<-0.5em,0em>}_{\mbox{relabel}}
}\]
Hence we obtain $\mu_{i}\mu_{j}\mu_{i}(G)\simeq \mu_{i}\mu_{j}(G)$. 
\item Let $i'$ follow $i$ being not $j$ and $j'$ follow $j$ being not $i$. 
We have the following mutation:
\[\xymatrix{
      &       & \circ \ar@{-}[d]^{j'} 
& &       &       & \circ \ar@{-}[d]^{j'} 
& &       &       & \circ \ar@{-}[d]^{j'} \\
\circ \ar@{-}[r]^{i} \ar@{-}[d]_{i'} & \circ \ar@{-}[r]^{j} & \circ 
& & \circ \ar@{-}[d]_{i'} & \circ \ar@{-}[r]_{j} & \circ      
& & \circ \ar@{-}[d]_{i'} & \circ \ar@{-}[dl]^{j} & \circ \\
\circ &       &       
& & \circ \ar@{-}@/^2.5pc/[rru]^{i} &       &       
& & \circ \ar@{-}@/^2.5pc/[rru]^{i} &       &       \\
&&&&&&&&&&\\
      &       & \circ \ar@{-}[d]^{j'} 
& &       &       & \circ \ar@{-}[d]^{j'}
& &       &       & \circ \ar@{-}[d]^{j'} \\
\circ \ar@{-}[r]^{i} \ar@{-}[d]_{i'} \ar@{-}@/_2.5pc/[rru]_{j} & \circ & \circ 
& & \circ \ar@{-}[d]_{i'} \ar@{-}@/_2.5pc/[rru]_{j} & \circ \ar@{-}[ru]^{i} & \circ  
& & \circ \ar@{-}[d]_{i'} & \circ \ar@{-}[ru]^{i} \ar@{-}[ld]^{j} & \circ \\
\circ &       &       & & \circ &       &       & & \circ &       &      
\save "1,1"+<-1em, 0.5em>.{"3,3"+<1em, -0.5em>}*[F]\frm{}\restore 
\save "1,5"+<-1em, 0.5em>.{"3,7"+<1em, -0.5em>}*[F]\frm{}\restore 
\save "1,9"+<-1em, 0.5em>.{"3,11"+<1em, -0.5em>}*[F]\frm{}\restore 
\save "5,1"+<-1em, 0.5em>.{"7,3"+<1em, -0.5em>}*[F]\frm{}\restore 
\save "5,5"+<-1em, 0.5em>.{"7,7"+<1em, -0.5em>}*[F]\frm{}\restore 
\save "5,9"+<-1em, 0.5em>.{"7,11"+<1em, -0.5em>}*[F]\frm{}\restore 
\ar "2,3"+<1em, 0em>;"2,5"+<-1em, 0em>^{i}
\ar "2,7"+<1em, 0em>;"2,9"+<-1em, 0em>^{j}
\ar "3,10"+<0em, -0.5em>;"5,10"+<0em, 0.5em>^{i}
\ar "3,2"+<0em, -0.5em>;"5,2"+<0em, 0.5em>_{j}
\ar "6,3"+<1em, 0em>;"6,5"+<-1em, 0em>_{i}
\ar "6,7"+<1em, 0em>;"6,9"+<-1em, 0em>_{j}
}\]
Hence one gets $\mu_i\mu_j\mu_i(G)\simeq \mu_j\mu_i\mu_j(G)$.
\end{enumerate}
\end{proof}

As another application of Theorem \ref{thm:mutation of Brauer tree}, 
we give a simple proof of the following stronger statement than Theorem \ref{thm:star}. 

\begin{cor}\label{cor:star length 2}
Let $G$ be a Brauer tree and $\ell\geq1$. 
For every vertex $v$ of $G$ there exists a tilting complex $T\in\KK^{\rm b}(\proj{A_G})$ of the form 
$(\cdots\to 0\to P^0\to P^1\to 0\to\cdots)$ with $P^0, P^1\in\proj{A_G}$ 
such that the Brauer tree of $\ehomo{\KK^{\rm b}(\proj{A})}{T}$ is a star with the vertex $v$ in the center.
\end{cor}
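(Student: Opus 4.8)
The plan is to run everything through Theorem~\ref{thm:mutation of Brauer tree} and reduce to a combinatorial statement about the tree $G$. A tilting complex of the prescribed shape $(\cdots\to 0\to P^0\to P^1\to 0\to\cdots)$ is, up to isomorphism in $\KK^{\rm b}(\proj{A_G})$, precisely an Okuyama--Rickard complex $T(E_0)$ for a subset $E_0\subseteq E$: the summands indexed by $E_0$ are the $P_i$ in degree $0$, while the remaining summands are the two-term presentations $Q_i\to P_i$. Since $A_G$ is symmetric, Proposition~\ref{prop:ok} shows that every such $T(E_0)$ is automatically tilting, so the tilting condition needs no separate verification and $B:=\ehomo{\KK^{\rm b}(\proj{A_G})}{T(E_0)}$ is derived equivalent to $A_G$ for free. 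It therefore suffices to exhibit one subset $E_0$ for which the Brauer tree of $B$ is the star with centre $v$, the multiplicities being carried along unchanged exactly as in Theorem~\ref{thm:mutation of Brauer tree}, with $m_v$ landing at the centre. The content of the corollary over Theorem~\ref{thm:star} is thus the prescribed length-two shape together with the freedom to put \emph{any} vertex $v$ at the centre.

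The key is that I do \emph{not} iterate the single mutations $\mu_i$: a composite $\mu_{i_s}\cdots\mu_{i_1}$ produces a tilting complex whose length grows with $s$, whereas the statement demands length two. Instead I would produce the subset $E_0$ in one stroke. Rooting $G$ at $v$ and using that a tree is bipartite, I write the vertex set as $V_0\sqcup V_1$ with $v\in V_0$, and read $E_0$ off this $2$-colouring so that $E\setminus E_0$ is exactly the family of edges that have to be folded in towards $v$. To keep the bookkeeping manageable I would organise the analysis of $B$ along the branch decomposition of $G$ at $v$: the subtrees hanging off the edges at $v$ meet only in $v$ and have pairwise disjoint edge sets, so the indecomposable summands of $T(E_0)$ group according to these branches, each branch contributing the mutation data of a smaller rooted Brauer tree. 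The base of the resulting induction, in which every edge already meets $v$, is the star itself and corresponds to $E_0=E$, i.e.\ $T=A_G$ concentrated in degree $0$.

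The main obstacle is to check that this single $E_0$ genuinely collapses $G$ to the star, i.e.\ that the many local rearrangements encoded by $T(E_0)$ are mutually compatible and take effect in one length-two step rather than only after iteration; this is the simultaneous-mutation refinement of the computation carried out in Section~\ref{proof}. The crucial local fact is that, for each edge $i$, the minimal projective presentation $Q_i\to P_i$ of $e_iA/e_iAeA$ only involves the projectives attached to the neighbours of $i$, so the effect of $T(E_0)$ on the radical series of each projective is local to the cyclic orderings around the two endpoints of the corresponding edge. I would compute, for each indecomposable projective $B$-module $P$, the heart $\mathrm{rad}\,P/\mathrm{soc}\,P$ edge by edge, read off the induced cyclic orderings, and verify that they are precisely those of the star at $v$; the disjointness of the branches guarantees that these local computations do not interfere with one another. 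Once this edge-by-edge matching is in place, the argument of Section~\ref{proof} identifies the Brauer tree of $B$ with the star, and $T(E_0)$ is two-term by construction, which completes the proof.
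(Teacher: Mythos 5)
There is a genuine gap: your strategy of realizing the star at an arbitrary vertex $v$ by a \emph{single} Okuyama--Rickard complex $T(E_0)$ cannot work, and the identification on which it rests is false. First, a two-term tilting complex is not ``precisely'' an Okuyama--Rickard complex: in $T(E_0)$ the two-term summands are forced to be the minimal projective presentations of the modules $e_iA/e_iAeA$, whereas a general two-term tilting complex (in particular the one the paper actually constructs) need not be of this form. Second, and fatally, every indecomposable projective occurring in a summand $T_\ell$ of $T(E_0)$ is indexed by an edge sharing a vertex with $\ell$, so $C^{B}_{\ell m}=\dim{\homo{\KK^{\rm b}(\proj{A_G})}{T_\ell}{T_m}}$ vanishes whenever no edge adjacent to $\ell$ meets an edge adjacent to $m$; hence two edges that are far apart in $G$ can never become adjacent after one simultaneous mutation, while in a star \emph{every} pair of edges is adjacent. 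Concretely, for the path with three edges $1,2,3$ and $v$ the leaf endpoint of edge $1$, the star at $v$ requires $C^B_{13}\not=0$, but no choice of $E_0\subseteq\{1,2,3\}$ achieves this (e.g.\ $E_0=\{1\}$ gives $T_1=P_1$ and $T_3=(0\to P_3)$, whence $C^B_{13}=0$; $E_0=\{1,3\}$ and $E_0=\{1,2\}$ give a path and the star at an internal vertex, respectively). Your bipartite colouring and branch decomposition do not repair this, because the obstruction is the locality of a single simultaneous mutation, not interference between branches.

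The paper avoids this by \emph{iterating} single mutations: Lemma \ref{mutating to star} produces a sequence of pairwise \emph{distinct} edges $i_1,\dots,i_\ell$ such that $\mu_{i_\ell}\cdots\mu_{i_1}(G)$ is the star at $v$ (each step pulls one more edge onto $v$), and Lemma \ref{distinct edges} shows that the composite tilting complex $T_\ell=F_{i_1}\cdots F_{i_\ell}(\mu_{i_\ell}\cdots\mu_{i_1}(A_G))$ remains two-term precisely because each successive mutation only replaces a stalk summand $P_{i_\ell}$ that has not been touched before. The resulting two-term complex is in general \emph{not} an Okuyama--Rickard complex of $A_G$; this is exactly the point your proposal misses. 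To salvage a one-shot argument you would have to work with arbitrary two-term tilting complexes rather than with the complexes $T(E_0)$ of Section \ref{pre}, which would require a separate proof that the relevant complex is tilting and a new computation of its endomorphism algebra.
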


To prove the corollary above, we need the following preliminary results. 

\begin{lemma}\label{mutating to star}
For any Brauer tree $G$ and any vertex $v$ of $G$,
there exists a sequence $i_1,...,i_\ell$ of distinct edges
such that $\mu_{i_\ell}\cdots\mu_{i_1}(G)$ is a star with the
vertex $v$ in the center.
\end{lemma}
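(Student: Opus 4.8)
The plan is to reach the star by repeatedly \emph{pulling edges inward} toward $v$, using cases (1) and (2) of Definition~\ref{mutation of Brauer tree} as a single elementary move and controlling the process by a potential. For an edge $f$ of a Brauer tree $H$ let $\mathrm{dist}_H(v,f)$ be the number of edges on the path from $v$ to the endpoint of $f$ nearest to $v$, so that $\mathrm{dist}_H(v,f)=0$ precisely when $f$ is incident with $v$, and set $\Phi(H)=\sum_f \mathrm{dist}_H(v,f)$. Since $H$ is a finite connected tree, $\Phi(H)=0$ if and only if $H$ is a star with center $v$. I will show that, whenever $H$ is not such a star, one prescribed mutation strictly decreases $\Phi$, and that the edges mutated along the way are automatically pairwise distinct; listing them in the order performed then yields the required sequence $i_1,\dots,i_\ell$.

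The elementary move is the following. If $H$ is not a star with center $v$, then $v$ has a neighbour $w$, joined to $v$ by an edge $e$, with $\deg_H(w)\ge 2$; indeed, any edge not incident with $v$ lies beyond such a $w$ on its unique path to $v$. Let $f$ be the edge that immediately precedes $e$ in the cyclic ordering at $w$, so that $e$ follows $f$ at $w$; such an $f\neq e$ exists because $\deg_H(w)\ge 2$. This is exactly the setting of Definition~\ref{mutation of Brauer tree} with $i=f$ and the following edge $j_1=e$: mutating at $f$ slides the endpoint of $f$ lying at $w$ across $e$ to the far endpoint of $e$, namely $v$, carrying along the whole branch of $H$ hanging beyond $f$. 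Thus in $\mu_f(H)$ the edge $f$ is incident with $v$, its near endpoint has passed from distance $1$ to distance $0$, and every edge of the branch beyond $f$ has moved one step closer to $v$, while no edge moves further away. Hence $\Phi(\mu_f(H))<\Phi(H)$, and after finitely many such moves we reach a star with center $v$.

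It remains to check that the mutated edges are \emph{distinct}, and here the bookkeeping is pleasant. The rule only ever selects an edge $f$ whose near endpoint $w$ is a neighbour of $v$ \emph{different} from $v$; after the move, the near endpoint of $f$ is $v$ itself. No subsequent move disturbs an edge already incident with $v$ (each later move merely pushes some \emph{other} endpoint onto $v$, mutating a different edge and leaving the crossed edge in place), so the near endpoint of $f$ stays equal to $v$ forever and $f$ can never again be selected. Therefore each edge is mutated at most once, and the sequence so produced consists of distinct edges.

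The main obstacle is the assertion made in the second paragraph: that the mutation of Definition~\ref{mutation of Brauer tree} applied to $f$ genuinely slides the $w$-endpoint of $f$ across $e$ \emph{toward} $v$, rather than pushing the opposite endpoint outward. Concretely, one must align the local configuration at $w$ with the figures of Definition~\ref{mutation of Brauer tree}, identify $e$ with the edge $j_1$ following $i=f$, and read off from those figures both the direction of the slide and the precise way the cyclic orderings are updated at $w$, at $v$, and along the branch; the update is needed to guarantee that at the next stage a legitimate predecessor edge $f$ is again available at the relevant neighbour of $v$. Once this local analysis is settled, the potential argument together with the distinctness bookkeeping completes the proof, and the resulting sequence of distinct mutations is exactly what can later be repackaged, through a single Okuyama--Rickard complex $T(E_0)$, into the two-term tilting complex of Corollary~\ref{cor:star length 2}.
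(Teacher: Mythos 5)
Your proof is correct and follows essentially the same route as the paper: at each step you mutate the edge $f$ immediately preceding, in the cyclic ordering at a neighbour $w$ of $v$, the edge joining $w$ to $v$, which by Definition~\ref{mutation of Brauer tree} reattaches $f$ to $v$, and you iterate until all edges meet $v$. The paper's proof is exactly this move repeated (presented via before/after pictures, without your explicit potential $\Phi$); the only nitpick is that not every edge of the branch beyond $f$ moves strictly closer to $v$ after the mutation (some keep their distance), but since $f$ itself does and no edge moves farther, $\Phi$ still strictly decreases and your termination and distinctness bookkeeping stand.
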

\begin{proof}
Take an edge $i_1$ which is followed by an edge $j$ with the vertex $v$: 
\[\xymatrix{
\circ \ar@{-}[r]^{i_1}_{}="a" & \circ \ar@{-}[r]_{}="b"^{j} \ar@{}[r]^(1){v} & \circ  
\ar@{.}@/_1pc/"a";"b"
}\]
By Theorem \ref{thm:mutation of Brauer tree}, we see that the Brauer tree $\mu_{i_1}(G)$ is of the form 
\[\xymatrix{
      & \circ \ar@{-}[d]^{i_1}="a" & \\
\circ \ar@{-}[r]_{j}="b" \ar@{}[r]_(1){v} & \circ 
\ar@{.}@(rd, rd)"a";"b"
}\]
which says that the edge $i_1$ has the vertex $v$. 
Continuing the argument, we obtain a star with the vertex $v$ in the center. 
\end{proof}

\begin{lemma}\label{distinct edges} 
Let $G$ be a Brauer tree and $\ell\geq1$. 
If $i_1,\cdots,i_\ell$ are distinct edges of $G$, 
then there exists a tilting complex $T\in\KK^{\rm b}(\proj{A_G})$ of the form $(\cdots\to 0\to P^0\to P^1\to 0\to\cdots)$ 
with $P^0, P^1\in\proj{A_G}$
such that $\mu_{i_\ell}\cdots\mu_{i_1}(A_G)\simeq\ehomo{\KK^{\rm b}(\proj{A_G})}{T}$. 
\end{lemma}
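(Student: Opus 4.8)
The plan is to argue by induction on $\ell$, realizing each composite mutation by pulling the single Okuyama-Rickard mutation back along the tilting equivalence produced at the previous stage. To make the induction run I would strengthen the statement: writing $A:=A_G$, $B_m:=\mu_{i_m}\cdots\mu_{i_1}(A)$ and $I_m:=\{i_1,\dots,i_m\}$, I claim there is a two-term tilting complex $T^{(m)}=\bigoplus_{j\in E}T^{(m)}_j\in\KK^{\rm b}(\proj A)$ whose induced triangle equivalence $F_m\colon\KK^{\rm b}(\proj A)\xrightarrow{\sim}\KK^{\rm b}(\proj{B_m})$ carries $T^{(m)}_j$ to the indecomposable projective $e_jB_m$, and such that $T^{(m)}_j=P_j$ is a stalk in degree $0$ for every $j\notin I_m$. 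This last clause is the bookkeeping that powers the argument, and the base case $m=0$ (take $T^{(0)}=A$, $F_0=\mathrm{id}$) is trivial.

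For the inductive step I would fix $B:=B_{m-1}$, $F:=F_{m-1}$ and form, over $B$, the Okuyama-Rickard complex $U$ with respect to $E\setminus\{i_m\}$, so that $\ehomo{\KK^{\rm b}(\proj B)}{U}\cong\mu_{i_m}(B)=B_m$ by Proposition \ref{prop:ok}; its summand $U_j$ is the stalk $e_jB$ for $j\neq i_m$, while $U_{i_m}$ is a two-term complex $Q\xrightarrow{\pi}e_{i_m}B$ with $Q$ projective. I then set $T^{(m)}:=F^{-1}(U)$, which is a tilting complex as the image of a tilting complex under a triangle equivalence. For $j\neq i_m$ this gives $T^{(m)}_j=F^{-1}(e_jB)=T^{(m-1)}_j$, so the stalk clause for indices $j\notin I_m$ is inherited verbatim from the inductive hypothesis; the whole point is then to check that the one remaining summand $F^{-1}(U_{i_m})$ is still two-term.

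This is where distinctness of the edges enters, and I expect it to be the main obstacle. Writing $U_{i_m}\cong\mathrm{cone}(\pi\colon Q\to e_{i_m}B)[-1]$ and applying the triangle functor $F^{-1}$, I get $F^{-1}(U_{i_m})\cong\mathrm{cone}(F^{-1}\pi)[-1]$, where $F^{-1}(Q)$ is a direct sum of the complexes $T^{(m-1)}_j$ (each concentrated in degrees $0$ and $1$) and $F^{-1}(e_{i_m}B)=T^{(m-1)}_{i_m}$. Because $i_m\notin I_{m-1}$, the inductive hypothesis forces $T^{(m-1)}_{i_m}=P_{i_m}$ to be a stalk in degree $0$; feeding this into the mapping-cone formula shows the cone is supported in two consecutive degrees, so $F^{-1}(U_{i_m})$, and hence $T^{(m)}$, is two-term. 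Had $i_m$ already been mutated, $T^{(m-1)}_{i_m}$ would be genuinely two-term and the cone would acquire a third cohomological degree; this is exactly the spillover that distinctness removes, so the delicate part of the proof is pinning down the degree of each pulled-back summand and confirming no term appears outside degrees $0$ and $1$.

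It then remains to close the induction. The composite equivalence $F_m:=G\circ F$, where $G$ is the mutation equivalence attached to $U$, sends $T^{(m)}_j=F^{-1}(U_j)\mapsto U_j\mapsto e_jB_m$, so the labelling clause is preserved, and $\ehomo{\KK^{\rm b}(\proj A)}{T^{(m)}}\cong\ehomo{\KK^{\rm b}(\proj B)}{U}\cong B_m$. Taking $m=\ell$ yields a two-term tilting complex $T^{(\ell)}$ with $\ehomo{\KK^{\rm b}(\proj A)}{T^{(\ell)}}\simeq\mu_{i_\ell}\cdots\mu_{i_1}(A_G)$, which is the assertion of the lemma.
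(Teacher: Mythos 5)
Your proposal is correct and follows essentially the same route as the paper: induction on the number of mutations, using distinctness to guarantee that the summand about to be mutated is still a degree-$0$ stalk $P_{i_m}$, and then a cone/triangle degree count to see that the new summand stays concentrated in degrees $0$ and $1$. The only cosmetic difference is that you transport the Okuyama--Rickard complex over $B_{m-1}$ back along $F^{-1}$ and compute the mapping cone explicitly, whereas the paper invokes the mutation triangle $P'\to R\to P_{i_\ell}\to P'[1]$ from [AI]; these describe the same complex.
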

\begin{proof}
For any edge $i$ of $G$ 
we have a derived equivalence $F_i:\KK^{\rm b}(\proj{\mu_i(A_G)})\xrightarrow{\sim}\KK^{\rm b}(\proj{A_G})$ 
given by the Okuyama-Rickard complex $T(E\backslash \{i\})$ of $A_G$, 
Put $T_\ell:=F_{i_1}\cdots F_{i_\ell}(\mu_{i_\ell}\cdots\mu_{i_1}(A_G))$, 
which is a tilting complex in $\KK^{\rm b}(\proj{A_G})$.
We show that $T_\ell$ has a form
\begin{equation}\label{our complex}
T_\ell=\begin{cases}
\begin{array}{c}
\xymatrix@!R=0.5pt{
 (0{\rm{th}}) & (1{\rm{st}}) & \\
P \ar[r] & 0 \\
\oplus & \\
Q^0 \ar[r] & Q^1
}
\end{array}
\end{cases}
\end{equation}
where $P=\bigoplus_{j\in E\backslash \{i_1,\cdots,i_\ell\}}P_j$ and $Q^0, Q^1\in\proj{A_G}$.
We use induction on $\ell\geq1$. 
If $\ell=1$, then one observes $T_1=T(E\backslash \{i_1\})$. 
This says that $T_1$ is of the form (\ref{our complex}). 
Assume $\ell\geq2$. 
It follows from the induction hypothesis that $P_{i_\ell}$ is a direct summand of $T_{\ell-1}$ and 
the complement $T_{\ell-1}\backslash P_{i_\ell}$ is of the form $R:=(\cdots\to 0\to R^0\to R^1\to 0\to\cdots)$. 
We see that $T_\ell$ is given by the direct sum of $R$ and a complex $P'$ 
which admits a triangle $P'\to R\to P_{i_\ell}\to P'[1]$: see \cite{AI}. 
Since $P_{i_\ell}$ and $R$ concentrate on degree 0 and $(0,1)$ respectively, 
one obtains that $P'$ is of the form $(\cdots\to0\to (P')^0\to (P')^1\to0\to\cdots)$. 
This implies that $T_\ell$ is of the form (\ref{our complex}). 
\end{proof}

Now Corollary \ref{cor:star length 2} is an immediate consequence of Lemma \ref{mutating to star} and Lemma \ref{distinct edges}. 
\qed
\medskip
 
The following result is a direct consequence of Corollary \ref{cor:star length 2}. 

\begin{cor}
Let $A$ be a Brauer tree algebra. 
Any basic algebra which is derived equivalent to $A$ is obtained from $A$ by iterated tilting mutation. 
\end{cor}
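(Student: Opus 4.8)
The plan is to reduce the statement to the numerical classification of Brauer tree algebras and then to exploit that tilting mutation is invertible. Let $G$ be the Brauer tree of $A$, so that $A\simeq A_G$, and let $B$ be a basic algebra derived equivalent to $A$. The first, and really the only nontrivial external, ingredient is that the class of Brauer tree algebras is closed under derived equivalence: since $A_G$ is a basic, indecomposable, symmetric Brauer tree algebra, any basic algebra derived equivalent to it is again a Brauer tree algebra. Granting this, we may write $B\simeq A_H$ for some Brauer tree $H$, and the problem becomes to connect $A_G$ and $A_H$ by a chain of tilting mutations.

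First I would observe that ``being obtained from $A$ by iterated tilting mutation'' is an equivalence relation on basic algebras. Indeed, by Theorem \ref{tilting formula}(1) we have $(\mu_i)^{s}(A)\simeq A$ for some positive integer $s$, so each mutation $\mu_i$ is invertible with inverse $(\mu_i)^{s-1}$, which is itself an iterated tilting mutation; symmetry and transitivity follow at once. Hence it suffices to produce a single Brauer tree algebra to which both $A_G$ and $A_H$ can be mutated.

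Next I would produce that common algebra as a star. Since $A_G$ and $A_H$ are derived equivalent Brauer tree algebras, Theorem \ref{thm:star}(2) forces $G$ and $H$ to have the same number $n$ of edges and the same multiplicity $m$ of the exceptional vertex. Choosing $v$ to be the exceptional vertex of $G$ (any vertex when $m=1$), Lemma \ref{mutating to star} yields a sequence of mutations turning $G$ into the star $G^\ast$ with $v$ in the center; because mutation of Brauer trees leaves all multiplicities unchanged, $G^\ast$ is precisely the star with $n$ edges whose central vertex has multiplicity $m$. Applying the same construction to $H$ produces a star $H^\ast$ with identical combinatorial data, so $G^\ast\cong H^\ast$ as Brauer trees, whence $A_{G^\ast}\simeq A_{H^\ast}$ since $A_{(-)}$ depends only on the tree up to isomorphism. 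By Theorem \ref{thm:mutation of Brauer tree} these tree mutations transport to tilting mutations of algebras, so both $A_G$ and $A_H$ are obtained from the common algebra $A_{G^\ast}\simeq A_{H^\ast}$ by iterated tilting mutation; reversing the $H$-chain with the invertibility noted above, $B\simeq A_H$ is obtained from $A\simeq A_G$ by iterated tilting mutation.

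The main obstacle is the very first step: knowing a priori that the arbitrary basic algebra $B$ is itself a Brauer tree algebra, that is, that the class of Brauer tree algebras is stable under derived equivalence. This does not follow from the combinatorics developed here and must be imported as a classical structural fact; once it is in place, everything else is the bookkeeping of the classification by stars together with the invertibility of mutation.
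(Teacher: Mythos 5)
Your argument is correct and follows essentially the same route as the paper, which obtains the corollary from the fact that any Brauer tree algebra can be mutated to a star with prescribed center (Lemma \ref{mutating to star}, packaged as Corollary \ref{cor:star length 2}) together with the invertibility of mutation coming from Theorem \ref{tilting formula}(1) and the numerical classification in Theorem \ref{thm:star}(2). The only remark is that the ``external ingredient'' you flag --- closure of the class of Brauer tree algebras under derived equivalence --- does not need to be imported: it is proved in the paper as Proposition \ref{prop:closed under derived}.
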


More generally, the statement above is shown for representation-finite symmetric algebras in \cite{A}.



\section{Proof of main theorem}\label{proof}

In this section we prove the main theorem of this paper. 

We define an $(n\times n)$-matrix $C^{A}$ as $C^{A}_{ij}={\rm{dim}}_{k}\homo{A}{P_{i}}{P_{j}}$ for any $i,j\in E$, 
called the \emph{Cartan matrix} of $A$. 
Note that if $A$ is a symmetric algebra, then we have $C^{A}_{ij}=C^{A}_{ji}$ for any $i, j\in E$. 

We have the following property. 

\begin{lemma}\label{lemma:Brauer tree and Cartan matrix}
Let $G$ be a Brauer tree. 
Then the Cartan matrix $C^{A_G}$ of $A_G$ is determined as follows: 
\[C^{A_G}_{ij}=
\begin{cases}
\ m_v+m_u & \mbox{if } i=j\ \mbox{and } \mbox{the ends of }i\ \mbox{are } u, v; \\
\ m_v & \mbox{if } i\not=j\ \mbox{and } i, j\ \mbox{have a common end } v; \\
\ 0 & \mbox{otherwise}.
\end{cases}\]
\end{lemma}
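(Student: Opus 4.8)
The plan is to reduce the Cartan entries to composition-factor multiplicities in the indecomposable projectives and then read these off directly from the Loewy structure prescribed in the definition of $A_G$. First I would record the standard identity
\[
C^{A_G}_{ij}={\rm{dim}}_{k}\homo{A_G}{P_i}{P_j}=[P_j:S_i],
\]
the multiplicity of $S_i$ as a composition factor of $P_j$: since $P_i$ is projective, the functor $\homo{A_G}{P_i}{-}$ is exact and hence additive along any composition series, and $\homo{A_G}{P_i}{S_k}\cong\delta_{ik}\,k$ because $A_G$ is basic. As $A_G$ is symmetric we have $C^{A_G}_{ij}=C^{A_G}_{ji}=[P_i:S_j]$, so it is enough to count the multiplicity of $S_j$ in the single module $P_i$ in each case.

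Next I would unwind the structure of $P_i$ from the definition of a Brauer tree algebra. Writing $u,v$ for the two ends of the edge $i$, which are distinct since $G$ is a tree, the module $P_i$ has top and socle both isomorphic to $S_i$, while ${\rm{rad}}(P_i)/{\rm{soc}}(P_i)$ is the direct sum of two uniserial modules, one attached to $u$ and one to $v$. For the vertex $v$ with cyclic ordering $(i,l_1,\dots,l_q,i)$, the prescribed list of composition factors, in which $S_i$ occurs $m_v-1$ times, consists of $m_v$ successive blocks $S_{l_1},\dots,S_{l_q}$ separated by the $m_v-1$ copies of $S_i$; hence in this uniserial module each $S_{l_r}$ occurs exactly $m_v$ times and $S_i$ occurs $m_v-1$ times, and symmetrically for $u$.

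The three cases then follow by additivity of composition length. If $i=j$, the top and socle contribute one copy of $S_i$ each and the uniserial modules at $u$ and $v$ contribute a further $m_u-1$ and $m_v-1$ copies, for a total of $m_u+m_v$. If $i\neq j$ and $i,j$ share the end $v$, then because $G$ is a tree they have no other common end, so $S_j$ appears only inside the uniserial module at $v$ and its multiplicity is $m_v$. If $i$ and $j$ share no vertex, then $S_j$ is absent from both uniserial blocks and from the top and socle, so $C^{A_G}_{ij}=0$. The only point demanding care is the correct reading of the repetition pattern in the uniserial modules --- that an edge $l_r$ meeting $v$ acquires the full multiplicity $m_v$ whereas the edge $i$ itself acquires only $m_v-1$ within that block; the tree hypothesis does the remaining work by ensuring that distinct edges meet in at most one vertex, so the cases do not overlap.
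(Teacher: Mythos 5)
Your proof is correct. The paper states this lemma without proof (it is presented as a standard property of Brauer tree algebras), and your argument --- reducing $C^{A_G}_{ij}$ to the composition multiplicity $[P_j:S_i]$ via exactness of $\homo{A_G}{P_i}{-}$ and then counting factors in the two uniserial constituents of ${\rm{rad}}(P_i)/{\rm{soc}}(P_i)$ --- is exactly the standard computation one would supply, including the two points that actually require care: that each $S_{l_r}$ occurs $m_v$ times while $S_i$ occurs only $m_v-1$ times in the uniserial at $v$, and that the tree hypothesis forces distinct edges to meet in at most one vertex so the cases are disjoint.
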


Conversely, the Cartan matrix $C^A$ of a Brauer tree algebra $A$ and the data of extensions among simple $A$-modules 
determine the Brauer tree of $A$ by the following: 

\begin{method}\label{method}
Let $A$ be a Brauer tree algebra. 
Assume that the Cartan matrix $C^{A}$ of $A$ and $\dim\ext{A}{1}{S_i}{S_j}$ for any $i,j\in E$ are given. 
We explicitly determine the Brauer tree $G$ of $A$. 

(1) We give the cyclic ordering containing each edge. 
Fix any $i\in E$. 
We define a subset $I$ of $E$ by $I=\{j\in E\ |\ C^{A}_{ij}\not=0\}$. 
Since $G$ is a Brauer tree, one has a disjoint union $I=\{i\}\cup I_{0}\cup I_{1}$ satisfying   
$C^{A}_{i_{0}i_{1}}=0$ for any $i_{0}\in I_{0}$ and $i_{1}\in I_{1}$.
Moreover, for any $\ell\in \{0,1\}$ and any $j\in \{i\}\cup I_\ell$ 
there exists uniquely $j'\in \{i\}\cup I_\ell$ such that $\ext{A}{1}{S_j}{S_{j'}}\not=0$. 
Thus we can take sequences 
\[i=i^{0}, i^{1}, \cdots, i^{a}, i^{a+1}=i\ \mbox{in } \{i\}\cup I_{0}\]
\[i=j^{0}, j^{1}, \cdots, j^{b}, j^{b+1}=i\ \mbox{in } \{i\}\cup I_{1}\] 
such that $\ext{A}{1}{S_{i^{x}}}{S_{i^{x+1}}}\not=0$ for any $0\leq x\leq a$ and 
$\ext{A}{1}{S_{j^{y}}}{S_{j^{y+1}}}\not=0$ for any $0\leq y\leq b$. 
Hence we can explicitly determine the cyclic ordering containing $i$:
\[\xymatrix{
\circ \ar@{-}[rd]^{i^{a}}_{}="a1" &       &    &   & \circ \ar@{-}[ld]_{j^1}^{}="b1" \\
                  & \circ \ar@{-}[rr]^{i}& & \circ & \\
\circ \ar@{-}[ru]_{i^1}^{}="a2"  &       &&       & \circ \ar@{-}[lu]^{j^{b}}_{}="b2"
\ar@{.}@/^/"a2";"a1" 
\ar@{.}@/^/"b1";"b2"
\save "1,1"."3,5"*[F]\frm{}\restore
}\]

(2) We give the position of the exceptional vertex if it exists. 
Note that the exceptional vertex exists if and only if there is $i\in E$ satisfying $C^A_{ii}>2$. 
Put $\mathcal{E}:=\{i\in E\ |\ C^A_{ii}>2\}$ and assume that $\mathcal{E}$ is not an empty set. 
Since the Brauer tree $G$ has only one exceptional vertex, 
we observe that all edges in $\mathcal{E}$ have a common vertex $v$ and 
any edge having the vertex $v$ belongs to $\mathcal{E}$. 
Thus the vertex $v$ is exceptional. 
\qed
\end{method}

We show the following easy observation. 

\begin{prop}\label{prop:closed under derived}
Let $A$ and $B$ be derived equivalent symmetric $k$-algebras. 
If $A$ is a Brauer tree algebra, then so is $B$. 
\end{prop}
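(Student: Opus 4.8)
The plan is to reduce the statement to stable equivalence and then to read off ``Brauer tree algebra'' from invariants that survive stable equivalence. First I would use that $A$ and $B$ are symmetric, hence self-injective, so by Rickard's theorem \cite{Ri} the given triangle equivalence $\KK^{\rm b}(\proj A)\simeq\KK^{\rm b}(\proj B)$ induces a triangle equivalence $\sta{A}\simeq\sta{B}$. This has three immediate consequences. Since a derived equivalence preserves the center, it preserves the decomposition into blocks, so $B$ is indecomposable (one block), as $A$ is. Since the stable equivalence puts the non-projective indecomposables of $A$ and $B$ into bijection, and every Brauer tree algebra is representation-finite, $B$ is representation-finite. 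And the stable Auslander--Reiten quiver of $B$ is isomorphic to that of $A$ as a translation quiver.

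Next I would place $B$ inside the classification of representation-finite self-injective algebras over the algebraically closed field $k$ (Riedtmann, Gabriel--Riedtmann). To each such algebra this theory attaches a Dynkin tree class, which depends only on the stable Auslander--Reiten quiver and is therefore a stable invariant. A Brauer tree algebra has tree class $A_n$, so the same holds for $B$; hence $B$ is a connected representation-finite symmetric algebra of tree class $A_n$. It then remains to identify such algebras with Brauer tree algebras, which is the content of the classification in type $A$.

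The main obstacle is exactly this last identification, and it has two delicate points. First, one must rule out the \emph{nonstandard} self-injective algebras; these occur only for an exceptional series of tree classes (of type $D$), so membership in tree class $A_n$ forces $B$ to be standard. Second, one must check that a standard connected representation-finite symmetric algebra of tree class $A_n$ is precisely a Brauer tree algebra; concretely this amounts to matching the mesh category of $\mathbb{Z}A_n$ modulo the relevant admissible automorphism group with the stable module category of the algebra attached to some Brauer tree, and observing that the resulting algebra is symmetric exactly when the orbit algebra is. Having settled this, $B$ is a Brauer tree algebra; if desired, its precise Brauer tree can afterwards be recovered from $C^B$ and the numbers $\dim\ext{B}{1}{S_i}{S_j}$ via Method \ref{method}.
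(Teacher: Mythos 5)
Your argument is correct, but it takes a genuinely different route from the paper. The paper first invokes Rickard's star theorem (Theorem \ref{thm:star}) to replace $A$ by a derived equivalent Brauer tree algebra $C$ for a star, observes that $C$ is a symmetric Nakayama algebra, deduces that $B$ is stably equivalent to $C$, and then quotes a single textbook result (\cite[X, Theorem 3.14]{ARS}, of Gabriel--Riedtmann type) saying that an algebra stably equivalent to a symmetric Nakayama algebra is a Brauer tree algebra. You instead pass directly to the stable equivalence $\sta{A}\simeq\sta{B}$ and run the full Riedtmann classification of representation-finite self-injective algebras: tree class as a stable invariant, exclusion of the nonstandard algebras (which live only in type $D_{3m}$, char $2$), and the identification of standard connected symmetric algebras of tree class $A_n$ with Brauer tree algebras. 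Both arguments are sound and both ultimately rest on classification results of Riedtmann type; the paper's version is shorter because the star theorem reduces everything to one clean citation, while yours avoids the star theorem at the cost of invoking heavier covering-theory machinery and of having to handle the standard/nonstandard dichotomy explicitly. The one point you should make sure is nailed down in your version is the final identification of standard symmetric representation-finite algebras of class $A_n$ with Brauer tree algebras --- you correctly flag it as the main obstacle, and it is a known theorem (Riedtmann, Gabriel--Riedtmann), but as written it remains a sketch; citing \cite[X, Theorem 3.14]{ARS} or Riedtmann's classification of self-injective algebras of class $A_n$ would close it.
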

\begin{proof}
By Theorem \ref{thm:star}, $A$ is derived equivalent to 
a Brauer tree algebra $C$ for a star with the exceptional vertex in the center if it exists. 
Since $A$ and $B$ are derived equivalent, 
it follows that $B$ and $C$ are also derived equivalent. 
This implies that $B$ is stable equivalent to $C$. 
Note that $C$ is a symmetric Nakayama algebra. 
Hence the assertion follows from \cite[X, Theorem 3.14]{ARS}. 
\end{proof}

We also need the following result. 

\begin{lemma}\label{lemma:ok_stable} \cite[Lemma 2.1]{Ok} 
Let $E_{0}$ be a subset of $E$ and put $e:=\sum_{i\in E_{0}}e_{i}$.
Let $T:=T(E_0)$ be the Okuyama-Rickard complex with respect to $E_0$. 
Assume that $A$ is a symmetric algebra. 
Now the endomorphism algebra $B=\ehomo{\KK^{\rm{b}}(\proj{A})}{T}$ of $T$ is stable equivalent to $A$ and  
we denote the stable equivalence by $F:{\underline{\rm{mod}}}\text{-}A\stackrel{\sim}{\rightarrow}{\underline{\rm{mod}}}\text{-}B$. 
Then the following hold:  
\begin{enumerate}[$(1)$]
\item If $i \not\in E_{0}$, then $F(\syz{}{S_{i}})$ is a simple $B$-module;
\item If $i \in E_{0}$, then $F(Y_i)$ is a simple $B$-module, 
where $Y_i$ is maximal amongst submodules of $P_i$ such that 
any $S_j$ ($j\in E_0$) is not a composition factor of $Y_i/S_i$.
\end{enumerate}
\end{lemma}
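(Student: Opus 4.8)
The plan is to realise $F$ as the descent to stable categories of the derived equivalence attached to the tilting complex $T$, and then to compute this functor on the two families of modules. Since $A$ is symmetric and $T$ is tilting (Proposition \ref{prop:ok}), the functor $\Phi:=\mathrm{Hom}^{\bullet}_{A}(T,-)$ induces a triangle equivalence $\mathrm{D}^{\rm b}(\modu A)\xrightarrow{\sim}\mathrm{D}^{\rm b}(\modu B)$ carrying each indecomposable summand $T_{j}$ to the indecomposable projective $B$-module $e_{j}B$; in particular it sends $\KK^{\rm b}(\proj A)=\thick{T}$ onto $\KK^{\rm b}(\proj B)$. Because $A$ and $B$ are symmetric, their stable categories are the singularity categories $\mathrm{D}^{\rm b}(\modu A)/\KK^{\rm b}(\proj A)$ and $\mathrm{D}^{\rm b}(\modu B)/\KK^{\rm b}(\proj B)$, so $\Phi$ descends to the stable equivalence $F\colon\sta{A}\xrightarrow{\sim}\sta{B}$. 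Thus it suffices to show that, for $X=\syz{}{S_{i}}$ when $i\notin E_{0}$ and $X=Y_{i}$ when $i\in E_{0}$, the complex $\Phi(X)$ of $B$-modules is isomorphic, in the singularity category of $B$, to the simple module $S_{i}^{B}:={\rm top}(e_{i}B)$, which is non-projective and hence survives in $\sta{B}$.

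First I would record the local description of $\Phi$ summand by summand. For $j\in E_{0}$ the summand $T_{j}=P_{j}$ sits in degree $0$, so $\mathrm{Hom}^{\bullet}_{A}(T_{j},X)$ is the space $\homo{A}{P_{j}}{X}$ in degree $0$, of dimension equal to the multiplicity $[X:S_{j}]$. For $j\notin E_{0}$ the complex $T_{j}=(Q_{j}\xrightarrow{\pi_{j}}P_{j})$ is a minimal projective presentation of $e_{j}A/e_{j}AeA$, so applying $\homo{A}{-}{X}$ shows that the two nonzero cohomologies of $\mathrm{Hom}^{\bullet}_{A}(T_{j},X)$ are governed by $\homo{A}{e_{j}A/e_{j}AeA}{X}$ in one degree and by $\ext{A}{1}{e_{j}A/e_{j}AeA}{X}$ in the adjacent degree. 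Assembling these contributions over all $j$, together with the differentials coming from the $\pi_{j}$, yields the $B$-module structure on the cohomology of $\Phi(X)$, whose $e_{j}$-component in a given degree is controlled by the groups just listed.

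The heart of the argument is then to feed in the defining data of $X$, namely the short exact sequence $0\to\syz{}{S_{i}}\to P_{i}\to S_{i}\to 0$ when $i\notin E_{0}$ and the inclusion $Y_{i}\subseteq P_{i}$ when $i\in E_{0}$, and to read off the assembled cohomology. This is exactly where the precise choice of $X$ matters. The module $e_{j}A/e_{j}AeA$ has top $S_{j}$ and all of its composition factors among the $S_{k}$ with $k\notin E_{0}$; the syzygy $\syz{}{S_{i}}$ (respectively the \emph{maximal} submodule $Y_{i}$ whose quotient $Y_{i}/S_{i}$ omits every $S_{j}$ with $j\in E_{0}$) is calibrated so that $\homo{A}{e_{j}A/e_{j}AeA}{X}$ and $\ext{A}{1}{e_{j}A/e_{j}AeA}{X}$ vanish for $j\neq i$ and contribute a single copy of the wanted simple for $j=i$. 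Passing to the syzygy, rather than using $S_{i}$ itself, is precisely what shifts the surviving cohomology of $\Phi$ into degree $0$ so that its stable class is a genuine module and not a cosyzygy.

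I expect the main obstacle to be controlling the perfect-complex ambiguity in this last step. One cannot detect the class of $\Phi(X)$ in the singularity category of $B$ simply by mapping in the projectives $e_{j}B$, since these are zero in $\sta{B}$; so the bookkeeping must establish not merely that the correct one-dimensional piece appears at the vertex $i$, but that \emph{every} remaining cohomology contribution of $\Phi(X)$ — in particular the parts supported at the vertices $j\in E_{0}$, which come from the projective summands $T_{j}=P_{j}$ — organises into a complex in $\KK^{\rm b}(\proj B)$ that dies stably. Equivalently, one must prove that $\Phi(X)\cong S_{i}^{B}\oplus R$ in $\mathrm{D}^{\rm b}(\modu B)$ with $R$ perfect, i.e. that modulo $\KK^{\rm b}(\proj B)$ the complex $\Phi(X)$ is a stalk concentrated in a single degree. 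Making this cancellation precise is the technical crux, and it is exactly what forces the use of the specific syzygy $\syz{}{S_{i}}$ and of the maximal submodule $Y_{i}$ in place of the naive candidates $S_{i}$ and $P_{i}$.
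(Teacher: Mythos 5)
First, note what you are competing against: the paper does not prove this lemma at all --- it is quoted verbatim from Okuyama's preprint \cite{Ok} (Lemma 2.1 there), so the only ``paper proof'' is a citation. Your overall strategy is nevertheless the correct and standard one: $A$ symmetric and $T$ tilting force $B$ to be symmetric, both stable categories are the Verdier quotients $\mathrm{D}^{\rm b}(\modu{A})/\KK^{\rm b}(\proj{A})$ and $\mathrm{D}^{\rm b}(\modu{B})/\KK^{\rm b}(\proj{B})$, the derived equivalence $\Phi=\mathbf{R}\mathrm{Hom}_{A}(T,-)$ preserves the subcategories of perfect complexes, and $F$ is its descent. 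Up to that point the argument is sound.

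The genuine gap is that you never carry out the computation that \emph{is} the proof: you state that one must show $\Phi(X)\cong S^{B}_{i}$ modulo $\KK^{\rm b}(\proj{B})$, correctly flag the danger coming from the summands $T_{j}=P_{j}$ with $j\in E_{0}$, and then declare this ``the technical crux'' and stop. In addition, the bookkeeping you set up is both unjustified and harder than necessary: for $i\notin E_{0}$ you propose to control $\homo{A}{e_{j}A/e_{j}AeA}{\syz{}{S_{i}}}$ and $\ext{A}{1}{e_{j}A/e_{j}AeA}{\syz{}{S_{i}}}$, and the vanishing pattern you assert for these groups is neither obvious nor verified. The efficient route for (1) is to compute $\Phi(S_{i})$ and shift afterwards. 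For $j\in E_{0}$ the complex $\mathrm{Hom}^{\bullet}_{A}(T_{j},S_{i})=\homo{A}{P_{j}}{S_{i}}$ vanishes since $j\neq i$. For $j\notin E_{0}$ the complex has $\homo{A}{P_{j}}{S_{i}}=\delta_{ij}k$ in degree $-1$ and $\homo{A}{Q_{j}}{S_{i}}$ in degree $0$; the latter is zero because $Q_{j}$ is the projective cover of $e_{j}AeA$, whose top involves only simples $S_{k}$ with $k\in E_{0}$. Hence $\Phi(S_{i})$ is a one-dimensional stalk in degree $-1$ on which $e_{i}$ acts as the identity, so $\Phi(S_{i})\cong S^{B}_{i}[1]$, and since $\syz{}{-}$ is $[-1]$ in the singularity category one gets $F(\syz{}{S_{i}})\cong S^{B}_{i}$ with no perfect-complex ambiguity to cancel. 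The analogous degreewise computation for $X=Y_{i}$, $i\in E_{0}$ --- where the maximality of $Y_{i}$ is exactly what annihilates the $\homo{A}{Q_{j}}{-}$ and $\homo{A}{P_{j}}{-}$ contributions at $j\neq i$ --- is the actual content of part (2) and is entirely absent from your write-up. As it stands the proposal is a plan for a proof, not a proof.
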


Now we are ready to prove Theorem \ref{thm:mutation of Brauer tree}. 

\begin{mpf}
Let $G$ be the Brauer tree of $A$ 
and we use the notation of Definition \ref{mutation of Brauer tree}. 
Since the Okuyama-Rickard complex $T:=T(E\backslash\{i\})$ is tilting by Proposition \ref{prop:ok}, 
$B:=\mu_i(A)$ is a Brauer tree algebra by Proposition \ref{prop:closed under derived}. 
Our goal is to show that the Brauer tree of $B$ coincides with $\mu_i(G)$. 
To do this, we only have to calculate the Cartan matrix of $B$ and extensions among simple $B$-modules.
 
Recall that $T$ is defined as the direct sum of the following complexes:
\[\begin{array}{c}
\xymatrix@!R=0.5pt{
& (0{\rm{th}}) & (1{\rm{st}}) & \\
T_{i}: & P_{i_1}\oplus P_{j_1} \ar[r] & P_{i} &  \\
T_{\ell}: & P_{\ell} \ar[r] & 0 & (\ell\not=i)
}
\end{array}
\]
(If the edge $i$ is external, then replace the above first complex with $P_{i_1}\to P_{i}$ or $P_{j_1}\to P_{i}$.) 

(1) Let $C^A, C^B$ be Cartan matrices of $A, B$. 
We calculate $C^B_{\ell m}$. 
For each $\ell\in E$, we denote by $P^B_{\ell}$ a projective indecomposable $B$-module corresponding to $T_\ell$. 

\begin{enumerate}[(i)]
\item We can easily check $C^B_{\ell m}=C^A_{\ell m}$ for any $\ell\not=i$ and $m\not=i$. 
\item We calculate $C^B_{i\ell}$ for $\ell\not=i$. 
If $\ell\not=i$, then we have equalities 
\[\def\arraystretch{1.5}
\begin{array}{rl}
C^{B}_{i\ell} &= \dim{\homo{B}{P^{B}_{i}}{P^{B}_{\ell}}} \\
           &= \dim{\homo{\KK^{\rm{b}}(\proj{A})}{T_{i}}{T_{\ell}}} \\
           &= \dim{\homo{A}{P_{i_1}}{P_{\ell}}}+\dim{\homo{A}{P_{j_1}}{P_{\ell}}}-\dim{\homo{A}{P_{i}}{P_{\ell}}} \\
           &= C^{A}_{i_1\ell}+C^{A}_{j_1\ell}-C^{A}_{i\ell}.
\end{array}\]
Therefore one sees the following: 
\[C^B_{i\ell}=\begin{cases}
\ 0 & (\ell\in \{i_2,\cdots,i_a\}\ \mbox{or } \{j_2,\cdots,j_b\}); \\
\ C^A_{i_1\ell}\not=0 & (\ell\in\{g_1,\cdots,g_c\}); \\
\ C^A_{j_1\ell}\not=0 & (\ell\in\{h_1,\cdots,h_d\}); \\
\ m_{v(\ell)} & (\ell=i_1\ \mbox{or } j_1); \\
\ 0 & (\mbox{otherwise})
\end{cases}\]
where $v(\ell)$ is the vertex of $\ell$ that $i$ does not have. 
\item We show $C^B_{ii}=m_{v(i_1)}+m_{v(j_1)}$.     
One sees equalities 
\[\def\arraystretch{1.5}
\begin{array}{rl}
C^B_{ii} &= \dim\homo{B}{P^B_i}{P^B_i} \\
         &= \dim\homo{\KK^{\rm b}(\proj{A})}{T_i}{T_i} \\
         &= C^A_{ii}+C^A_{i_1i_1}+C^A_{j_1j_1}+2C^A_{i_1j_1}-2(C^A_{ii_1}+C^A_{ij_1}) \\
         &= m_{v(i_1)}+m_{v(j_1)}. 
\end{array}\]
\end{enumerate}

(2) For each $\ell\in E$, we put $S^{B}_{\ell}=P^{B}_{\ell}/{\rm{rad}}P^{B}_{\ell}$. 
We calculate $\ext{B}{1}{S^{B}_{\ell}}{S^{B}_{m}}$. 
We denote by $F:\sta{A}\to \sta{B}$ 
the stable equivalence between $A$ and $B$ given by $T$. 
By Lemma \ref{lemma:ok_stable}, 
one sees that $F$ sends 
\[X_\ell:=\begin{cases}
\ \syz{}{S_i} & (\ell=i) \\
\ Y_{\ell} & (\ell=i_1\ \mbox{or } j_1) \\
\ S_\ell & (\mbox{otherwise})
\end{cases}\]
to $S^B_\ell$, 
where $Y_\ell$ is a unique submodule of $P_\ell$ whose Loewy series is
$\begin{pmatrix}
S_i \\
S_\ell
\end{pmatrix}$.
\begin{enumerate}[(a)]
\item We can easily check $\ext{B}{1}{S^{B}_{\ell}}{S^{B}_{m}}\simeq \ext{A}{1}{S_\ell}{S_m}$ 
for $\ell, m\not\in \{i, i_1, j_1\}$ or $\ell=m=i$. 
\item We calculate $\ext{B}{1}{S^B_\ell}{S^B_m}$ for $\ell\in\{i,i_1,j_1\}$ and $m\not\in\{i,i_1,j_1\}$. 
One has isomorphisms
\[\def\arraystretch{1.5}
\begin{array}{rl}
\ext{B}{1}{S^B_\ell}{S^B_m} &\simeq \ext{A}{1}{X_\ell}{S_m} \\
                            &\simeq \shomo{A}{\syz{}{X_\ell}}{S_m} \\
                            &\simeq \homo{A}{\syz{}{X_\ell}}{S_m} \\
                            &\simeq \begin{cases}
                             \ \homo{A}{\syz{2}{S_i}}{S_m} & (\ell=i) \\
                             \ \homo{A}{\syz{}{Y_\ell}}{S_m} & (\ell=i_1\ \mbox{or } j_1) \\
                            \end{cases} \\
                            & \begin{cases}
                             \ \not=0 & (\ell=i\ \mbox{and } m\in\{g_1,h_1\}) \\
                             \ \not=0 & ((\ell, m)=(i_1, i_2)\ \mbox{or } (j_1, j_2)) \\
                             \ =0      & (\mbox{otherwise}).
                            \end{cases}
\end{array}\]
Similarly, for $\ell\not\in\{i,i_1,j_1\}$ and $m\in\{i,i_1,j_1\}$ we obtain isomorphisms
\[\def\arraystretch{1.5}
\begin{array}{rl} 
\ext{B}{1}{S^B_\ell}{S^B_m} &\simeq \homo{A}{S_\ell}{\syz{-1}{X_m}} \\
                            &\simeq \begin{cases}
                             \ \homo{A}{S_\ell}{S_i} & (m=i) \\
                             \ \homo{A}{S_\ell}{\syz{-1}{Y_m}} & (m=i_1\ \mbox{or } j_1) 
                            \end{cases} \\
                            & \begin{cases} 
                             \ \not=0 & ((\ell,m)=(i_a,i_1), (g_c, i_1), (j_b,j_1)\ \mbox{or } (h_d,j_1)) \\
                             \ =0 & (\mbox{otherwise}).
                            \end{cases}
\end{array}\]
\item We show $\ext{B}{1}{S^B_\ell}{S^B_i}\not=0$ for $\ell\in\{i_1,j_1\}$. 
One sees isomorphisms
\[\def\arraystretch{1.5}
\begin{array}{rl} 
\ext{B}{1}{S^B_\ell}{S^B_i} &\simeq \ext{A}{1}{Y_\ell}{\syz{}{S_i}} \\
                            &\simeq \shomo{A}{Y_\ell}{S_i} \\
                            &\simeq \homo{A}{Y_\ell}{S_i} \\
                            &\not=0.
\end{array}\]
Similarly, for $\ell\in\{i_1, j_1\}$ we have an isomorphism 
\[\def\arraystretch{1.5}
\begin{array}{rl} 
\ext{B}{1}{S^B_i}{S^B_\ell} &\simeq \homo{A}{S_i}{\syz{-2}{Y_\ell}} \\
                            & \begin{cases}
                             \ =0 & \mbox{if $\ell$ is an internal edge}  \\ 
                             \ \not=0 & \mbox{otherwise} \\
                            \end{cases}
\end{array}\].
\item We calculate $\ext{B}{1}{S^B_\ell}{S^B_m}$ for $\ell, m\in\{i_1,j_1\}$.
If $\ell\not=m$, it follows from (i) that $C^B_{\ell m}=0$, 
which implies $\ext{B}{1}{S^B_\ell}{S^B_m}=0$. 
Let $\ell=m$. 
Since $\homo{A}{Y_\ell}{Y_\ell}\simeq \homo{A}{P_i}{Y_\ell}$, 
we have isomorphisms 
\[\def\arraystretch{1.5}
\begin{array}{rl} 
\ext{B}{1}{S^B_\ell}{S^B_\ell} &\simeq \ext{A}{1}{Y_\ell}{Y_\ell} \\
                               &\simeq \homo{A}{\syz{}{Y_\ell}}{Y_\ell} \\
                               & \begin{cases}
                                \ \not=0 & \mbox{if } \ext{A}{1}{S_{\ell}}{S_i}\not=0\ \mbox{and } m_v>1; \\
                                \ =0 & \mbox{otherwise}
                               \end{cases}
                               
\end{array}\]
where $v$ is the common vertex of $\ell$ and $i$. 
\end{enumerate}

Applying Method \ref{method}, we conclude that the Brauer tree of $B$ is given by $\mu_i(G)$. 
\qed
\end{mpf}


\begin{ac}
The author would like to give his deep gratitude to Osamu Iyama, Shigeo Koshitani, Sefi Ladkani and Joseph Grant, 
who read the paper carefully and gave an important information on earlier literature and a lot of helpful comments and suggestions.
\end{ac}


\begin{thebibliography}{15}
\bibitem[A]{A} T. Aihara, \emph{Silting mutation for self-injective algebras}. arXiv:1012.3265. 
\bibitem[AI]{AI} T. Aihara, O. Iyama, \emph{Silting mutation in triangulated categories}. arXiv:1009.3370. 
\bibitem[Alp]{Alp}  J. L. Alperin, \emph{Local Representation Theory}. Cambrige Univ. Press, Cambridge, 1986.
\bibitem[ARS]{ARS} M. Auslander, I. Reiten, S. O. Smal$\phi$, \emph{Representation theory of Artin algebras}. 
Cambridge Stud. Adv. Math., vol. 36, Cambridge University Press, Cambridge, 1995. 
\bibitem[BGP]{BGP} I. N. Bernstein, I. M. Gelfand, V. A. Ponomarev, \emph{Coxeter functors, and Gabriel's theorem}. 
Uspehi Mat. Nauk 28 (1973), no.2(170), 19--33.
\bibitem[BIRS]{BIRS} A. B. Buan, O. Iyama, I. Reiten, J. Scott, \emph{Cluster structures for 2-Calabi-Yau categories 
and unipotent groups}. Compos. Math. 145 (2009), no. 4, 1035--1079.
\bibitem[DWZ]{DWZ} H. Derksen, J. Weyman, A. Zelevinsky, \emph{Quivers with potentials and their representations}.
I. Mutations, Selecta Math. (N.S.) 14 (2008), no. 1, 59--119.
\bibitem[GR]{GR} P. Gabriel, Ch. Riedtmann, \emph{Group representations without groups}. 
Comment. Math. Helv. 54, 240--287, 1979.
\bibitem[KZ]{KZ} S. K\"onig, A. Zimmermann. \emph{Tilting selfinjective algebras and Gorenstein orders},
Quart. J. Math. Oxford (2), 48 (1997), 351--361.
\bibitem[Ok]{Ok} T. Okuyama, \emph{Some examples of derived equivalent blocks of finite groups}. 
preprint, 1998. 
\bibitem[Ri]{Ri} J. Rickard, \emph{Derived categories and stable equivalence}. 
J. Pure. Appl. Alg. 61, 303--317, 1989. 
\bibitem[Ri2]{Ri2} J. Rickard, \emph{Morita theory for derived categories}. 
J. London Math. Soc. (2) 39 (1989), 301--317.
\bibitem[Ri3]{Ri3} J. Rickard, \emph{Equivalences of derived categories for symmetric algebras}. 
J. Alg. {\bf{257}} 460--481, 2002.
\bibitem[RS]{RS} J. Rickard, M. Schaps. \emph{Folded tilting complexes for Brauer tree algebras}, 
Adv. Math., 171, 169--182 (2002).
\bibitem[SZ]{SZ} M. Schaps, E. Zakay-Illouz. \emph{Pointed Brauer trees}, 
J. Alg., 246, 647--672 (2001).
\bibitem[Z]{Z} A. Zimmermann. \emph{Two sided tilting complexes for Green orders and Brauer tree algebras},
J. Alg., 187, 446--473 (1997).
\end{thebibliography}
\end{document}